\newtheorem{lemma}{{\sc Lemma}}[section]
\newtheorem{corollary}[lemma]{{\sc Corollary}}
\newtheorem{proposition}[lemma]{{\sc Proposition}}
\newtheorem{theorem}[lemma]{{\sc Theorem}}
\theoremstyle{definition}
\newtheorem{remark}[lemma]{{\sc Remark}}
\numberwithin{equation}{section}
\def\Gb{{\mathfrak{b}}}
\def\Gc{{\mathfrak{c}}}
\def\Bd{{\mathfrak{d}}}
\def\Gg{{\mathfrak{g}}}
\def\Gh{{\mathfrak{h}}}
\def\Gn{{\mathfrak{n}}}
\def\BA{{\mathbf{A}}}
\def\BB{{\mathbf{B}}}
\def\BC{{\mathbf{C}}}
\def\BD{{\mathbf{D}}}
\def\BE{{\mathbf{E}}}
\def\BF{{\mathbf{F}}}
\def\BQ{{\mathbf{Q}}}
\def\BZ{{\mathbf{Z}}}
\def\Bd{{\mathbf{d}}}
\def\Bm{{\mathbf{m}}}
\def\Bs{{\mathbf{s}}}
\def\Bt{{\mathbf{t}}}
\def\CB{{\mathcal B}}
\def\DD{{\mathcal D}}
\def\CO{{\mathcal O}}
\def\CM{{\mathcal M}}
\def\CR{{\mathcal R}}
\def\CS{{\mathcal S}}
\def\CT{{\mathcal T}}
\def\ad{{\mathop{\rm ad}\nolimits}}
\def\alg{{\mathop{\rm alg}\nolimits}}
\def\deru{\partial}
\def\End{\mathop{\rm{End}}\nolimits}
\def\gr{\mathop{\rm gr}\nolimits}
\def\Hom{\mathop{\rm Hom}\nolimits}
\def\Ht{\mathop{\rm ht}\nolimits}
\def\id{\mathop{\rm id}\nolimits}
\def\Ker{\mathop{\rm Ker\hskip.5pt}\nolimits}
\def\Mod{\mathop{\rm Mod}\nolimits}
\def\op{{\mathop{\rm op}\nolimits}}
\def\Proj{\mathop{\rm Proj}\nolimits}
\def\res{{\mathop{\rm res}\nolimits}}
\def\Spec{{\rm{Spec}}}
\def\Tor{{\rm{Tor}}}
\def\Dqab{D_q^{ab}}
\def\BDqab{{\BD_q^{ab}}}
\def\zDq{{}^0\!D_q}
\begin{document}
\title[differential operators on a quantized flag manifold]
{
The ring of differential operators on a quantized flag manifold}
\author{Toshiyuki TANISAKI}
\subjclass[2020]{Primary: 20G42, Secondary: 17B37}
\begin{abstract}
We establish some properties of the ring of differential operators on the  quantized flag manifold.
Especially, we give an explicit description of its localization on an affine open subset
in terms of the quantum Weyl algebra ($q$-analogue of boson).
\end{abstract}
\maketitle

\section{Introduction}
\subsection{}
Let $G$ be a connected, simply-connected simple algebraic group over the complex number field $\BC$, and let $\Gg$ be its Lie algebra.
The quantized flag manifold $\CB_q$ is a $q$-analogue of the flag manifold $\CB=B^-\backslash G$, where 
$B^-$ is a Borel subgroup of $G$.
Note that the ordinary flag manifold $\CB$ is a projective algebraic variety with homogeneous coordinate algebra
\[
A=\{f\in O(G)\mid f(ng)=f(g)\;(g\in G, n\in N^-)\},
\]
where $O(G)$ is the affine coordinate algebra of $G$, and $N^-$ denotes the unipotent radical of $B^-$.
A natural $q$-analogue $A_q$ of $A$ is defined 
as a subalgebra of the quantized coordinate algebra $O_q(G)$, and  the quantized flag manifold is defined by
 \[
 \CB_q=\Proj(A_q).
 \]
Since $A_q$ is a non-commutative graded ring, 
$\CB_q$ is a non-commutative algebraic variety 
(see \cite{LRD}, \cite{LR}).
In \cite{TBB}, \cite{T1},  \cite{T2}, \cite{T3}, \cite{TK}, \cite{T4}, \cite{T5}, we developed the theory of $D$-modules on 
$\CB_q$ using the framework presented  in \cite{LR}.
Namely, 
we defined a certain abelian category $\Mod(\DD_q)$  using  a certain graded subring $D_q$ of the ring of linear endomorphism of $A_q$, and investigated its connection with the representation theory of 
the quantized enveloping algebra of $\Gg$.
The category $\Mod(\DD_q)$ is a $q$-analogue of the category $\Mod(\DD)$ consisting of quasi-coherent $\DD$-modules, where $\DD$ is the sheaf of rings of universally twisted differential operators on $\CB$.
The present paper is devoted to establishing some fundamental properties of the ring $D_q$ itself.

Let $\Lambda$ be the weight lattice and let $\Lambda^+$ be the set of dominant weights.
Let $U_q(\Gg)$ be the quantized enveloping algebra of $\Gg$.
It is an associative algebra over the field $\BF=\BQ(q^{1/N})$ generated by elements
$e_i, f_i$ ($i\in I$), $k_\lambda$ 
($\lambda\in\Lambda$)  satisfying standard relations.
Here, $N$ is a positive integer satisfying a certain condition.
The $\BF$-algebra $A_q$ has a $\Lambda$-grading 
\[
A_q=\bigoplus_{\nu\in\Lambda^+}A_q(\nu).
\]
We have a natural left $U_q(\Gg)$-module structure of $A_q$ respecting the grading.
For $u\in U_q(\Gg)$, $\varphi\in A_q$, 
$\lambda\in\Lambda$ we define linear endomorphisms 
$\deru_u$, $\ell_\varphi$, $\sigma_\lambda$ of $A_q$ as the left action of $u$, the left multiplication by $\varphi$ and the grading operator given by
$\sigma_\lambda(\psi)=q^{(\lambda,\nu)}\psi$ for $\psi\in A_q(\nu)$ respectively.
Then we define $D_q$ to be the subalgebra generated by $\deru_u$ ($u\in U_q(\Gg)$), $\ell_\varphi$ ($\varphi\in A_q$), $\sigma_\lambda$ ($\lambda\in\Lambda$).

We have a set of obvious exchange relations among the above generators of $D_q$ (see \eqref{subeq:rel} below).
On the other hand we have another set of non-obvious relations given in the following manner.
For $\varphi\in A_q$ let $r_\varphi$ be the linear endomorphism of $A_q$ given by the right multiplication  by $\varphi$.
Since $A_q$ is non-commutative, $r_\varphi$ does not coincide with $\ell_\varphi$.
Note that $A_q$ is defined as a subalgebra of $O_q(G)$.
Since $O_q(G)$ is a Hopf algebra dual to $U_q(\Gg)$, 
the non-commutativity of $O_q(G)$ comes from the non-cocomutativity of $U_q(\Gg)$, which is controlled by the universal $R$-matrix $\CR$.
This consideration leads us to express 
$r_\varphi$ as an element $\Xi_\CR(\varphi)\in D_q$ involving $\CR$ and the elements
$\deru_u$ ($u\in U_q(\Gg)$), $\ell_\psi$ ($\psi\in A_q$), $\sigma_\lambda$ ($\lambda\in\Lambda$).
Note that we have two universal $R$-matrices $\CR$ and $\CR'$, one in a completion of $U_q(\Gb^+)\otimes U_q(\Gb^-)$ and the other in a completion of $U_q(\Gb^-)\otimes U_q(\Gb^+)$.
Here, $\Gb^+$ and $\Gb^-$ are opposite Borel subalgebras of $\Gg$.
Hence, we obtain a non-obvious relation 
$\Xi_\CR(\varphi)=\Xi_{\CR'}(\varphi)$ in $D_q$ 
for each $\varphi\in A_q$
(see Proposition \ref{prop:Rel1} below).
Those non-obvious relations play crucial roles in this paper.

Let $U^e_q(\Gg)$ be the subalgebra of $U_q(\Gg)$ generated by $e_i, f_ik_{\alpha_i}$ ($i\in I$), $k_{2\lambda}$ ($\lambda\in\Lambda$).
Then we have
\[
U_q(\Gg)=\bigoplus_{\overline{\lambda}\in\Lambda/2\Lambda}k_\lambda U_q^e(\Gg)
=
\bigoplus_{\overline{\lambda}\in\Lambda/2\Lambda}U_q^e(\Gg)k_\lambda.
\]
For $a=1, 2$ we set
\[
U_q(\Gg)^a=
\begin{cases}
U_q(\Gg)\quad&(a=1)
\\
U_q^e(\Gg)\quad&(a=2).
\end{cases}
\]
For $a, b=1, 2$ we define a subalgebra $D_q^{ab}$ of $D_q$ by
\[
D_q^{ab}=\langle \deru_u, \ell_\varphi, \sigma_{\lambda}
\mid
u\in U_q(\Gg)^a, \varphi\in A_q, \lambda\in b\Lambda
\rangle.
\]
In particular, we have $D_q=D_q^{11}$.
One of the main results of this paper is 
\begin{subequations}
\begin{align}
D_q^{a1}\cong&\BF[\sigma_\Lambda]\otimes_{\BF[\sigma_{2\Lambda}]}D_q^{a2}
\cong
D_q^{a2}\otimes_{\BF[\sigma_{2\Lambda}]}
\BF[\sigma_\Lambda]
&(a=1, 2),
\\
D_q^{1b}
\cong&
U_q(\Gg)\otimes_{U_q^e(\Gg)}D_q^{2b}
\cong
D_q^{2b}\otimes_{U_q^e(\Gg)}
U_q(\Gg)
&(b=1, 2),
\end{align}
\end{subequations}
where $\BF[\sigma_{b\Lambda}]$ 
($b=1, 2$) is the subalgebra 
$\sum_{\lambda\in\Lambda}\BF\sigma_{b\lambda}$ 
of $D_q$, which is naturally isomorphic to the group algebra of $b\Lambda$.
We will prove this fact through some consideration on the adjoint action of $U_q(\Gg)$ on $D_q$.

Let $W$ be the Weyl group.
For $w\in W$ set $U_w=B^-\backslash B^-B^+w\subset \CB$, where $B^+$ is the Borel subgroup opposite to $B^-$.
The affine open covering 
$
\CB=\bigcup_{w\in W}U_w
$
of $\CB$ admits a natural $q$-analogue 
\[
\CB_q=\bigcup_{w\in W}U_{w,q}.
\]
For each $w\in W$ 
we can define a ring
${\DD}_q|_{U_{w,q}}$ 
as the degree zero part of the  localization of 
${D}_q$ with respect to a certain multiplicative subset $\CS_w$ satisfying the left and right Ore conditions.
We give an explicit description of the ring 
${\DD}_q|_{U_{w,q}}$ in this paper.
Using the braid group action we can show 
$
{\DD}_q|_{U_{w,q}}
\cong
{\DD}_q|_{U_{1,q}}
$, 
and hence it is sufficient to give an explicit description of ${\DD}_q|_{U_{1,q}}$.
This ring contains a subring $\BB_q$ which already appeared in \cite{Jo0}.
This  subring  is isomorphic to the ring called the 
$q$-analogue of  boson in \cite{Kas}, and 
the quantum Weyl algebra in \cite{JoB}.
For $\lambda\in\Lambda$ set $t_\lambda=\deru_{k_{-\lambda}}\sigma_\lambda\in(\CS_1^{-1}D_q)(0)=\DD_q|_{U_{1.q}}$.
Then the subalgebra 
$\BF[t_\Lambda]$ 
of $\DD_q|_{U_{1.q}}$ generated by 
$t_\lambda$ ($\lambda\in\Lambda$)  is 
naturally isomorphic to the group algebra of 
$\Lambda$.
Using the non-obvious relations in $D_q$ we can show 
$t_{\lambda}\in\BB_q$ for $\lambda\in 2\Lambda^+$.
We have the following explicit description of $\DD_q|_{U_{1.q}}$:
\begin{equation}
\DD_q|_{U_{1.q}}
\cong
(\BF[t_\Lambda]\otimes_{\BF[t_{2\Lambda^+}]}
\BB_q)\otimes_\BF\BF[\sigma_\Lambda]
\cong
(\BB_q\otimes_{\BF[t_{2\Lambda^+}]}
\BF[t_\Lambda])\otimes_\BF\BF[\sigma_\Lambda]
\end{equation}

Let $\tilde{D}_q$ be the algebra defined by the symbols 
$\deru_u$ ($u\in U_q(\Gg)$), $\ell_\varphi$ ($\varphi\in A_q$), $\sigma_\lambda$ ($\lambda\in\Lambda$) satisfying the obvious and the non-obvious relations as above.
We will also show that 
the natural surjective algebra homomorphism
$\tilde{p}:\tilde{D}_q\to D_q$ is almost isomorphic.
More precisely, we show that $\tilde{p}$ induces the 
isomorphism
\begin{equation}
\tilde{\DD}_q|_{U_{w,q}}
\cong
{\DD}_q|_{U_{w,q}}
\end{equation}
of algebras for any $w\in W$.
From this we obtain the equivalence 
\begin{equation}
\Mod(\DD_q)\cong\Mod(\tilde{\DD}_q)
\end{equation}
of abelian categories 
 defined using 
$D_q$ and $\tilde{D}_q$.
\subsection{}
The contents of this paper is as follows.
In Section 2 we recall  well-known standard results on the quantized enveloping algebras and the quantized coordinate algebras.
In Section 3 we introduce the ring of differential operators on the quantized flag manifold.
In Section 4 we establish some relations among certain subrings.
In Section 5 we give a local description of this ring.
Some applications to the theory of $D$-modules on the quantized flag manifold is given in Section 6.

\subsection{}
For a Hopf algebra $H$ we use Sweedler's notation 
\[
\Delta(h)=\sum_{(h)}h_{(0)}\otimes h_{(1)}
\qquad(h\in H)
\]
for the comultiplication $\Delta:H\to H\otimes H$.
\section{Quantum groups}
\subsection{}
Let $\Gg$ be a finite-dimensional simple Lie algebra over $\BC$, and let $\Gh$ be its Cartan subalgebra.
We denote by 
\[
\Delta\subset Q\subset \Lambda\subset\Gh^*
\]
the set of roots, the root lattice, the weight lattice respectively.
For $\alpha\in\Delta$ the corresponding coroot is denoted by $\alpha^\vee\in\Gh$.
We denote by $W\subset GL(\Gh^*)$ the Weyl group.
Let 
\[
(\;,\:):\Gh^*\times\Gh^*\to\BC
\]
be the $W$-invariant symmetric bilinear form satisfying $(\alpha,\alpha)=2$ for any short root $\alpha$.
We fix a set of simple roots $\{\alpha_i\}_{i\in I}\subset\Delta$, and denote by $\Delta^+$ the corresponding set of positive roots.
We denote by $s_i\in W$ 
the reflection associated to $i\in I$.
We set
\[
Q^+=\sum_{\alpha\in\Delta^+}\BZ_{\geqq0}\alpha\subset Q,
\qquad
\Lambda^+=\{\lambda\in\Lambda\mid
\langle\lambda,\alpha^\vee\rangle\geqq0\;(\forall\alpha\in\Delta^+)\}\subset\Lambda.
\]
For $i, j\in I$ we set $a_{ij}=\langle\alpha_j,\alpha_i^\vee\rangle$.
We define subalgebras $\Gb^+$, $\Gb^-$, $\Gn^+$, $\Gn^-$ of $\Gg$ by
\[
\Gn^\pm=\bigoplus_{\alpha\in\Delta^+}\Gg_{\pm\alpha},
\qquad
\Gb^\pm=\Gh\oplus\Gn^\pm,
\]
where $\Gg_\alpha$ for $\alpha\in\Delta$ denotes the corresponding root subspace.
\subsection{}
Take a positive integer $N$ satisfying
\begin{equation}
N(\Lambda,\Lambda)\subset\BZ,
\end{equation}
and let
\begin{equation}
\BF=\BQ(q^{1/N})
\end{equation}
be the rational function field in variable $q^{1/N}$ over $\BQ$.
We define the quantized enveloping algebra $U_q(\Gg)$ to be the associative $\BF$-algebra with $1$ generated by the elements
\[
k_\lambda\quad(\lambda\in\Lambda),
\qquad
e_i, f_i\quad(i\in I)
\]
satisfying the relations
\begin{subequations}
\begin{align}
&k_0=1, 
\quad k_\lambda k_\mu=k_{\lambda+\mu}
&(\lambda, \mu\in\Lambda),
\\
&
k_\lambda e_i=q_i^{\langle\lambda,\alpha_i^\vee\rangle}e_i k_\lambda,
\quad
k_\lambda f_i=q_i^{-\langle\lambda,\alpha_i^\vee\rangle}f_i k_\lambda
&(\lambda\in \Lambda, \;i\in I),
\\
&e_if_j-f_je_i=
\delta_{ij}
\frac{k_i-k_i^{-1}}{q_i-q_i^{-1}}
&(i, j\in I), 
\\
&
\sum_{r=0}^{1-a_{ij}}
(-1)^re_i^{(1-a_{ij}-r)}e_je_i^{(r)}
=
\sum_{r=0}^{1-a_{ij}}
(-1)^rf_i^{(1-a_{ij}-r)}f_jf_i^{(r)}=0
&(i, j\in I, \; i\ne j),
\end{align}
\end{subequations}
where $q_i=q^{(\alpha_i,\alpha_i)/2}$, $k_i=k_{\alpha_i}$ for $i\in I$, and 
$e_i^{(n)}=e_i^n/[n]_{q_i}!$, 
$f_i^{(n)}=f_i^n/[n]_{q_i}!$ 
for $i\in I$, $n\in\BZ_{\geqq0}$.
Here, for $n\in\BZ_{\geqq0}$ the Laurent polynomials 
$[n]_t, [n]_t!\in\BZ[t,t^{-1}]$ are defined by
\[
[n]_t=\frac{t^n-t^{-n}}{t-t^{-1}},
\qquad
[n]_t!=[1]_t[2]_t\dots[n]_t.
\]
Then $U_q(\Gg)$ is endowed with a Hopf algebra structure by
the counit $\varepsilon$, the comulitiplication $\Delta$, the antipode $S$ given by
\begin{subequations}
\begin{align}
&\varepsilon(k_\lambda)=1\quad(\lambda\in\Lambda),
\qquad
\varepsilon(e_i)=\varepsilon(f_i)=0\quad
(i\in I),
\\
&\Delta(k_\lambda)=k_\lambda\otimes k_\lambda
\quad(\lambda\in\Lambda),
\\
&
\nonumber
\Delta(e_i)=e_i\otimes1+k_i\otimes e_i,
\quad
\Delta(f_i)=f_i\otimes k_i^{-1}\otimes 1\otimes f_i\quad
(i\in I),
\\
&
S(k_\lambda)=k_{-\lambda}
\quad(\lambda\in\Lambda),
\qquad
S(e_i)=-k_i^{-1}e_i,\quad
S(f_i)=-f_ik_i\quad
(i\in I).
\end{align}
\end{subequations}

We define the adjoint action of $U_q(\Gg)$ on $U_q(\Gg)$ by
\[
\ad(u)(u')=\sum_{(u)}u_{(0)}u'(Su_{(1)})
\qquad(u, u'\in U_q(\Gg)).
\]

We define subalgebras $U_q(\Gh)$, $U_q(\Gb^\pm)$, $U_q(\Gn^\pm)$ of $U_q(\Gg)$ by
\[
U_q(\Gh)=\langle k_\lambda\mid\lambda\in\Lambda\rangle, 
\qquad
\]
\[
U_q(\Gn^+)=\langle e_i\mid i\in I\rangle,
\qquad
U_q(\Gn^-)=\langle f_i\mid i\in I\rangle,
\qquad
U_q(\Gb^\pm)=\langle U_q(\Gh), U_q(\Gn^\pm)\rangle.
\]
The subalgebas $U_q(\Gh)$, $U_q(\Gb^\pm)$ are Hopf subalgebras.
The multiplication of $U_q(\Gg)$ gives the isomorphisms
\[
U_q(\Gn^+)\otimes U_q(\Gh)\otimes U_q(\Gn^-)\xrightarrow{\sim}U_q(\Gg)
\xleftarrow{\sim}
U_q(\Gn^-)\otimes U_q(\Gh)\otimes U_q(\Gn^+)
\]
of $\BF$-modules.
We have
\[
U_q(\Gh)=\bigoplus_{\lambda\in\Lambda}\BF k_\lambda.
\]
For $\lambda\in\Lambda$ we define an algebra homomorphism 
\begin{equation}
\chi_\lambda:U_q(\Gh)\to\BF
\end{equation}
by $\chi_\lambda(k_\mu)=q^{(\lambda,\mu)}$ for $\mu\in\Lambda$.
Setting 
\[
U_q(\Gn^\pm)_\gamma=\{u\in U_q(\Gn^\pm)\mid
\ad(h)(u)=\chi_\gamma(h)u\;(h\in U_q(\Gh))\}
\qquad(\gamma\in Q)
\]
we have
\[
U_q(\Gn^\pm)=
\bigoplus_{\gamma\in Q^+}
U_q(\Gn^\pm)_{\pm\gamma}.
\]

Set
\begin{gather*}
U^e_q(\Gh)=\sum_{\lambda\in\Lambda}\BF k_{2\lambda}\subset U_q(\Gh),
\qquad
\tilde{U}_q(\Gn^-)=S(U_q(\Gn^-))=\langle f_ik_i\mid i\in I\rangle,
\\
U^e_q(\Gg)=\langle U^e_q(\Gh),\tilde{U}_q(\Gn^-), U_q(\Gn^+)\rangle.
\end{gather*}
The multiplication of $U^e_q(\Gg)$ gives the isomorphisms
\[
U_q(\Gn^+)\otimes U^e_q(\Gh)\otimes \tilde{U}_q(\Gn^-)\xrightarrow{\sim}U^e_q(\Gg)
\xleftarrow{\sim}
\tilde{U}_q(\Gn^-)\otimes U^e_q(\Gh)\otimes U_q(\Gn^+)
\]
of $\BF$-modules.
For $\lambda\in\Lambda$ we have
$k_\lambda U_q^e(\Gg)
=U_q^e(\Gg)k_\lambda$.
Moreover, this subspace of $U_q(\Gg)$ depends only on $\overline{\lambda}\in\Lambda/2\Lambda$, and 
we have
\begin{equation}
\label{eq:UUe}
U_q(\Gg)
=
\bigoplus_{\overline{\lambda}\in\Lambda/2\Lambda}
k_\lambda U_q^e(\Gg)
=
\bigoplus_{\overline{\lambda}\in\Lambda/2\Lambda}
U_q^e(\Gg)k_\lambda.
\end{equation}
We have also
\begin{equation}
\label{eq:adUe}
\ad(U_q(\Gg))(U_q^e(\Gg)k_\lambda)\subset
U_q^e(\Gg)k_\lambda
\end{equation}
for any $\lambda\in\Lambda$.

Set 
\[
U_q^f(\Gg)=
\{u\in U_q(\Gg)\mid
\dim \ad(U_q(\Gg))(u)<\infty\}.
\]
It is a subalgebra of $U_q(\Gg)$.
By \cite{JoB} we have $k_{-2\lambda}\in U_q^f(\Gg)$ for $\lambda\in\Lambda^+$, and 
the multiplicative set  
$\CT=\{k_{-2\lambda}\mid\lambda\in\Lambda^+\}$ 
satisfies the left and right Ore conditions in $U_q^f(\Gg)$.
Moreover, we have
\begin{equation}
\label{eq:Ufe}
\CT^{-1}U_q^f(\Gg)=U_q^e(\Gg).
\end{equation}

\subsection{}
We denote by 
\begin{equation}
\label{eq:Dr}
\tau:U_q(\Gb^+)\times U_q(\Gb^-)\to\BF
\end{equation}
the bilinear form uniquely characterized by
\begin{subequations}
\begin{align}
&\tau(x,y_1y_2)=(\tau\otimes\tau)(\Delta(x),y_1\otimes y_2)
&(x\in U_q(\Gb^+),\,y_1,y_2\in U_q(\Gb^-)),\\
&\tau(x_1x_2,y)=(\tau\otimes\tau)(x_2\otimes x_1,\Delta(y))
&(x_1, x_2\in U_q(\Gb^+),\,y\in U_q(\Gb^-)),\\
&\tau(k_\lambda,k_\mu)=q^{-(\lambda,\mu)}
&(\lambda,\mu\in\Lambda),\\
&\tau(k_\lambda, f_i)=\tau(e_i,k_\lambda)=0
&(\lambda\in\Lambda,\,i\in I),\\
&\tau(e_i,f_j)=\delta_{ij}/(q_i^{-1}-q_i)
&(i,j\in I).
\end{align}
\end{subequations}
We have
\begin{align}
&\tau(xk_\lambda, yk_\mu)=q^{-(\lambda,\mu)}\tau(x,y)
\qquad&(x\in U_q(\Gn^+), y\in U_q(\Gn^-), \lambda, \mu\in\Lambda),
\\
&\tau(U_q(\Gn^+)_\gamma, U_q(\Gn^-)_{-\delta})=0
&(\gamma, \delta\in Q^+, \gamma\ne\delta).
\end{align}
Moreover, for any $\gamma\in Q^+$ the restriction 
$\tau|_{U_q(\Gn^+)_\gamma\times U_q(\Gn^-)_{-\gamma}}$ is non-degenerate
(see \cite{T0}).
\subsection{}
For $i\in I$ let $T_i$ be Lusztig's automorphism
of the $\BF$-algebra $U_q(\Gg)$ given by 
\begin{align}
T_i(k_\mu)=&k_{s_i(\mu)}
\quad&(\mu\in\Lambda),
\\
T_i(e_j)=&
\begin{cases}
\sum_{k=0}^{-a_{ij}}(-q_i)^{-k}
e_i^{(-a_{ij}-k)}e_je_i^{(k)}
&(j\ne i)
\\
-f_ik_i
&(j=i)
\end{cases}
&(j\in I),
\\
T_i(f_j)=&
\begin{cases}
\sum_{k=0}^{-a_{ij}}(-q_i)^{k}
f_i^{(k)}f_jf_i^{(-a_{ij}-k)}
&(j\ne i)
\\
-k_i^{-1}e_i
&(j=i)
\end{cases}
&(j\in I).
\end{align}
We fix a reduced expression $w_0=s_{i_1}\dots s_{i_L}$ of the longest element $w_0\in W$ and set
\begin{equation}
\label{eq:root}
\beta_t=s_{i_1}\dots s_{i_{t-1}}(\alpha_{i_t}),
\qquad
e_{\beta_t}=T_{i_1}\dots T_{i_{t-1}}(e_{i_t})
\end{equation}
for $t=1,\dots, L$.
Then we have $\Delta^+=\{\beta_1,\dots, \beta_L\}$, and 
the monomials
$
e_{\beta_1}^{k_1}\dots e_{\beta_L}^{k_L}
$
for $(k_1,\dots, k_L)\in\BZ_{\geqq0}^L$ 
form a basis of $U_q(\Gn^+)$
(see \cite{Lb}).

\subsection{}
For a left (resp.\ right) $U_q(\Gh)$-module $M$ and $\lambda\in\Lambda$ we denote by $M_\lambda$ 
the subspace consisting of $m\in M$ satisfying 
$hm=\chi_\lambda(h)m$
(resp. $mh=\chi_\lambda(h)m$)
for any $h\in U_q(\Gh)$.
We say that a left or right $U_q(\Gh)$-module $M$ is 
diagonalizable 
if it is a direct sum of weight spaces $M_\lambda$ for $\lambda\in\Lambda$.

Let $\Gc$ be one of $\Gg, \Gb^+, \Gh$, and 
set $C=G, B^+, H$ accordingly.
We say that a left (resp.\ right) $U_q(\Gc)$-module $M$ is integrable if it is diagonalizable as a left (resp.\ right) $U_q(\Gh)$-module and we have
$\dim U_q(\Gc)m<\infty$
(resp.\ $\dim mU_q(\Gc)<\infty$) 
for any $m\in M$.
Note that the dual space $U_q(\Gc)^*=\Hom_\BF(U_q(\Gc),\BF)$ is endowed with a $U_q(\Gc)$-bimodule structure by
\[
\langle u\cdot\varphi\cdot u',u''\rangle
=
\langle\varphi,u'u''u\rangle
\qquad(\varphi\in U_q(\Gc)^*,  u, u', u''\in U_q(\Gc)).
\]
Let $O_q(C)$ be the maximal integrable left $U_q(\Gc)$-submodule of $U_q(\Gc)^*$.
Then it is also the maximal integrable right $U_q(\Gc)$-submodule of $U_q(\Gc)^*$.
We have a Hopf algebra structure of $O_q(C)$ whose multiplication, unit, comultiplication, counit, antipode are given by the transposes of 
comultiplication, counit, multiplication, unit, antipode respectively 
(see for example \cite{TBB}).
We have 
\[
O_q(H)=\bigoplus_{\lambda\in\Lambda}\BF\chi_\lambda.
\]

The embedding
$U_q(\Gh)\hookrightarrow U_q(\Gb^+)\hookrightarrow
U_q(\Gg)$ induces 
 surjective Hopf algebra homomorphisms
\[
O_q(G)\xrightarrow{\res}O_q(B^+)\xrightarrow{\res}O_q(H).
\]
On the other hand 
the surjective Hopf algebra homomorphism
\[
\pi:U_q(\Gb^+)\to U_q(\Gh)
\]
given by 
\[
\pi(h)=h\quad(h\in U_q(\Gh)),
\qquad
\pi(x)=\varepsilon(x)\quad(x\in U_q(\Gn^+))
\]
induces an injective Hopf algebra homomorphism
\begin{equation}
\label{eq:pi*}
\pi^*:O_q(H)\hookrightarrow O_q(B^+).
\end{equation}

\section{The algebra of differential operators}
\subsection{}
We define a subalgebra $A_q$ of $O_q(G)$ by
\[
A_q=\{\varphi\in O_q(G)\mid
\varphi\cdot y=\varepsilon(y)\varphi\;(\forall y\in U_q(\Gn^-))\}.
\]
It is a $(U_q(\Gg),U_q(\Gh))$-submodule of the $U_q(\Gg)$-bimodule $O_q(G)$.
Setting
\[
A_q(\nu)=\{\varphi\in A_q\mid
\varphi\cdot h=\chi_\nu(h)\varphi\;(\forall h\in U_q(\Gh))\}
\]
for $\lambda\in\Lambda^+$
we have
\begin{equation}
\label{eq:grA}
A_q=\bigoplus_{\nu\in\Lambda^+}A_q(\nu).
\end{equation}
Moreover, $A_q$ turns out to be a $\Lambda$-graded algebra by \eqref{eq:grA}.
It follows from the Peter-Weyl type theorem for $O_q(G)$ that $A_q(\nu)$ is an irreducible left $U_q(\Gg)$-module with highest weight $\nu$.
By \cite{Jo0} $A_q$ is an integral  domain 
in the sense that $\varphi\psi=0$ for $\varphi, \psi\in A_q$ implies $\varphi=0$ or $\psi=0$.

For $w\in W$ set 
\[
\CS_w=\bigcup_{\nu\in\Lambda^+}(A_q(\nu)_{w^{-1}\nu}\setminus\{0\}).
\]
Here, $A_q(\nu)_\mu$ for $\mu\in\Lambda$ denotes the weight space with respect to the left $U_q(\Gh)$-module structure.
By Joseph \cite{Jo0} the multiplicative subset $\CS_w$ of $A_q$ satisfies the left and right Ore conditions.
Since $A_q$ is an integral  domain, the canonical homomorphism $A_q\to \CS_w^{-1}A_q$ is injective.

\subsection{}
For $\varphi\in A_q$, $u\in U_q(\Gg)$, $\lambda\in \Lambda$ we define operators
$
\ell_{\varphi},
\deru_u,
\sigma_\lambda\in\End_\BF(A_q)
$
by
\[
\ell_{\varphi}(\psi)=\varphi\psi,
\qquad
\deru_u(\psi)=u\cdot\psi,
\qquad
\sigma_\lambda(\psi)=\psi\cdot k_\lambda
\]
for $\psi\in A_q$.
We have the following relations:
\begin{subequations}
\label{subeq:rel}
\begin{align}
\label{eq:rel0}
&\ell_1=\deru_1=\sigma_0=\id,
\\
\label{eq:rel1}
&\ell_\varphi\ell_\psi=\ell_{\varphi\psi}
&(\varphi, \psi\in {A_q}),
\\
\label{eq:rel2}
&
\deru_u\deru_{u'}=\deru_{uu'}
&(u, u'\in U_q(\Gg)),
\\
\label{eq:rel3}
&
\sigma_\lambda\sigma_{\mu}=\sigma_{\lambda+\mu}
&(\lambda, \mu\in\Lambda),
\\
\label{eq:rel4}
&\deru_u\ell_\varphi=\sum_{(u)}
\ell_{u_{(0)}\cdot\varphi}\deru_{u_{(1)}}
&(u\in U_q(\Gg),\; \varphi\in {A_q}),
\\
\label{eq:4a}
&\ell_\varphi\deru_u=
\sum_{(u)}
\deru_{u_{(1)}}\ell_{(S^{-1}u_{(0)})\cdot\varphi}
&
(u\in U_q(\Gg),\; \varphi\in {A_q}),
\\
\label{eq:rel5}
&\sigma_\lambda\deru_u=
\deru_u\sigma_\lambda
&(\lambda\in\Lambda,\;  u\in U_q(\Gg)),
\\
\label{eq:rel6}
&\sigma_\lambda\ell_\varphi=
q^{(\lambda,\nu)}\ell_\varphi\sigma_{\lambda}
&(\lambda\in\Lambda,\; \nu\in\Lambda^+,\;\varphi\in {A_q}(\nu)).
\end{align}
\end{subequations}

We define an $\BF$-subalgebra
$D_q$ of $\End_\BF(A_q)$ by
\begin{align}
D_{q}&=
\langle \ell_\varphi, \deru_u, \sigma_{\lambda}\mid
\varphi\in A_q, u\in U_q(\Gg), \lambda\in \Lambda
\rangle.
\end{align}
For $a, b=1, 2$ we define an $\BF$-subalgebra
$D_q^{ab}$ of $D_q$ 
 by
 \begin{align}
D_q^{ab}=
\langle \ell_\varphi, \deru_u, \sigma_{\lambda}\mid
\varphi\in A_q, u\in U_q(\Gg)^a, \lambda\in b\Lambda
\rangle,
\end{align}
where
\[
U_q(\Gg)^a=
\begin{cases}
U_q(\Gg)\quad&(a=1)
\\
U_q^e(\Gg)\quad&(a=2).
\end{cases}
\]

We take 
a basis $\{x_r\}$ of $U_q(\Gn^+)$,
a basis $\{y_r\}$ of $U_q(\Gn^-)$,
$\beta_r\in Q^+$ such that
\begin{equation}
\label{eq:canon}
x_r\in U_q(\Gn^+)_{\beta_r},
\qquad
y_r\in U_q(\Gn^-)_{-\beta_r},
\qquad
\tau(x_r, y_s)=\delta_{rs}.
\end{equation}
By \cite[Lemma 4.1]{T1} we have the following.
\begin{proposition}
\label{prop:Rel1}
Let $\nu\in\Lambda^+$ and $\mu\in\Lambda$.
Then for any $\varphi\in A_q(\nu)_\mu$ we have
\begin{equation}
\label{eq:Rel1}
\left(
\sum_r\ell_{y_r\cdot\varphi}\deru_{x_r}
\right)
\deru_{k_{-2\mu}}\sigma_{2\nu}
=
\sum_r\ell_{(Sx_r)\cdot\varphi}\deru_{y_rk_{\beta_r}}
\end{equation}
in $D_q^{22}$.
\end{proposition}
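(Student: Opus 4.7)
The plan is to show that both sides of \eqref{eq:Rel1} coincide, as operators on $A_q$, with the right-multiplication operator $r_\varphi\colon\psi\mapsto\psi\varphi$. Since $D_q\subset\End_\BF(A_q)$, this immediately yields the desired equality in $D_q$, and a separate bookkeeping check then confirms that each expression in fact lies in the subring $D_q^{22}$.

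The two presentations of $r_\varphi$ come from the two universal $R$-matrices $\CR$ and $\CR'$ mentioned in the introduction. Recall that the quasi $R$-matrix $\Theta=\sum_r x_r\otimes y_r$ is assembled from the bases dual under $\tau$ as in \eqref{eq:canon}, and the full $R$-matrix $\CR$, lying in a completion of $U_q(\Gb^+)\otimes U_q(\Gb^-)$, is $\Theta$ twisted by a Cartan factor encoding the form $(\,,\,)$ on $\Lambda$; the opposite $R$-matrix $\CR'$ lies in a completion of $U_q(\Gb^-)\otimes U_q(\Gb^+)$. The quasi-triangular identity $\Delta^{\op}(u)=\CR\Delta(u)\CR^{-1}$, transported to the dual Hopf algebra $O_q(G)$, turns the swap $\psi\varphi\leftrightarrow\varphi\psi$ into an action of $\CR$ on the pair $(\varphi,\psi)$.

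Applied to $\varphi\in A_q(\nu)_\mu$ and an arbitrary weight vector $\psi$, this computation yields
\[
\psi\varphi=\sum_r(y_r\cdot\varphi)\bigl(x_r\cdot(k_{-2\mu}\cdot\psi\cdot k_{2\nu})\bigr),
\]
where the Cartan twist $k_{-2\mu}\otimes k_{2\nu}$ arises by evaluating the diagonal part of $\CR$ against the weights $(\mu,\nu)$ of $\varphi$. Recognizing $k_{-2\mu}\cdot\psi=\deru_{k_{-2\mu}}(\psi)$ and $\psi\cdot k_{2\nu}=\sigma_{2\nu}(\psi)$ gives the LHS of \eqref{eq:Rel1}. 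Running the same computation with $\CR'$ in place of $\CR$ yields the RHS: the swap of tensor factors pushes the Cartan part onto the $U_q(\Gb^-)$-slot, so that $y_r$ is automatically multiplied by $k_{\beta_r}$ and no external $\sigma_{2\nu}$ is needed, while the antipode $S$ enters because the passage $\CR\rightsquigarrow\CR'$ involves $(S\otimes 1)\CR=\CR^{-1}$ or an analogous identity.

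To conclude that both sides land in $D_q^{22}$ it suffices to observe that $x_r\in U_q(\Gn^+)\subset U_q^e(\Gg)$, that $y_rk_{\beta_r}\in\tilde{U}_q(\Gn^-)\subset U_q^e(\Gg)$, that $k_{-2\mu}\in U_q^e(\Gh)$, and that $\sigma_{2\nu}\in\BF[\sigma_{2\Lambda}]$. The main obstacle in carrying out this plan is the careful tracking of the Cartan twists: one must verify that the raw $R$-matrix calculation, which a priori produces $k_\mu$- and $k_\nu$-type factors (belonging only to the larger $U_q(\Gh)$), regroups precisely so that all Cartan elements lie in $U_q^e(\Gh)$ and all weight shifts lie in $2\Lambda$. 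This is exactly what forces the doubled weights $-2\mu$ and $2\nu$ to appear in the statement.
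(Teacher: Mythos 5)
Your overall strategy is the same one the paper has in mind: the paper's ``proof'' of Proposition \ref{prop:Rel1} is just a citation of \cite[Lemma 4.1]{T1}, and the introduction explains that both sides of \eqref{eq:Rel1} are expressions $\Xi_\CR(\varphi)=\Xi_{\CR'}(\varphi)$ for the right-multiplication operator $r_\varphi$ obtained from the two universal $R$-matrices. Your bookkeeping for membership in $D_q^{22}$ is also correct: $x_r\in U_q(\Gn^+)$, $y_rk_{\beta_r}\in\tilde{U}_q(\Gn^-)$, $k_{-2\mu}\in U_q^e(\Gh)$, $\sigma_{2\nu}\in\BF[\sigma_{2\Lambda}]$, and the sums are finite because $y_r\cdot\varphi\neq0$ for only finitely many $r$.

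The genuine gap is that the two operator identities you display are asserted, not derived, and they \emph{are} the proposition. Writing $\psi\varphi=\sum_r(y_r\cdot\varphi)\bigl(x_r\cdot(k_{-2\mu}\cdot\psi\cdot k_{2\nu})\bigr)$ ``by evaluating the diagonal part of $\CR$'' skips exactly the delicate points: why the Cartan contribution comes out as the doubled weights $k_{-2\mu}$ and $\sigma_{2\nu}$ rather than single ones, how the general commutation rule in $O_q(G)$ (which a priori involves the full triangular decomposition of $U_q(\Gg)$) truncates to these forms --- this uses that $\varphi,\psi$ lie in $A_q$, i.e.\ are right $U_q(\Gn^-)$-invariant right weight vectors, a point you never invoke --- and, on the $\CR'$ side, why the antipode lands on $x_r$, why the Cartan correction is precisely $k_{\beta_r}$ attached to $y_r$, and why no residual $\deru_{k_\bullet}$ or $\sigma_\bullet$ factor survives. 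The identity you cite for the passage $\CR\rightsquigarrow\CR'$, namely $(S\otimes1)\CR=\CR^{-1}$ ``or an analogous identity,'' concerns the inverse $R$-matrix and does not by itself produce the stated right-hand side; without carrying out that computation (with the paper's normalization $\tau(k_\lambda,k_\mu)=q^{-(\lambda,\mu)}$, $\tau(x_r,y_s)=\delta_{rs}$), the argument could silently be off by a power of $q$ or a misplaced antipode. So as written this is a correct plan matching the paper's intended route, but the central calculation that constitutes the proof is still missing.
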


We have a grading
\begin{equation}
\label{eq:Dgr}
D_q^{ab}=\bigoplus_{\nu\in\Lambda^+}D_q^{ab}(\nu)
\qquad
(a, b=1,2)
\end{equation}
of $D_q^{ab}$ given by
\[
D_q^{ab}(\nu)=\{P\in D_q^{ab}\mid P(A_q(\nu'))\subset A_q(\nu'+\nu)
\;(\forall\nu'\in\Lambda^+)\}.
\]

\subsection{}
We sometimes regard $A_q$ with a subalgebra of $D_q^{ab}$ ($a, b=1, 2$) by the embedding $A_q\ni\varphi\mapsto \ell_\varphi\in D_q^{ab}$.

For $w\in W$ the multiplicative subset $\CS_w$ of $D_q^{ab}$ 
 ($a, b=1, 2$) satisfies the left and right Ore conditions.
 Moreover,   the canonical algebra homomorphism $D_q^{ab}\to \CS_w^{-1}D_q^{ab}$ is injective (see \cite[Section 1.2]{LRD}, \cite[Proposition 5.1]{TBB}).
 
For $\lambda\in\Lambda^+$ we define $c_\lambda\in A_q(\lambda)_\lambda$ by 
$
\langle c_\lambda,1\rangle =1$.
Then we have
$c_\lambda c_\mu=c_{\lambda+\mu}$ for $\lambda, \mu\in\Lambda^+$ and $\CS_1=\bigsqcup_{\lambda\in\Lambda^+}\BF^\times c_\lambda$.
It is easily seen that  
the grading \eqref{eq:Dgr} induces the grading
 \begin{equation}
 \label{eq:SDgr}
 \CS_1^{-1}D_q^{ab}
 =
 \bigoplus_{\nu\in\Lambda}
 (\CS_1^{-1}D_q^{ab})(\nu)
 \end{equation}
 of $\CS_1^{-1}D_q^{ab}$.
We regard $D_q^{ab}$ as a subalgebra of $\CS_1^{-1}D_q^{ab}$.
Then $\CS_1^{-1}D_q^{ab}$ is generated by 
$\ell_\varphi$ ($\varphi\in \CS_1^{-1}A_q$), 
$\deru_u$ ($u\in U_q(\Gg)^a$), 
$\sigma_\lambda$ ($\lambda\in b\Lambda$),
where for $\varphi=c^{-1}\psi\in \CS_1^{-1}A_q$ with 
$c\in\CS_1$, $\psi\in A_q$ we set 
$\ell_\varphi=\ell_c^{-1}\ell_\psi$.

 We have
 \[
 \CS_1^{-1}\Dqab\cong \CS_1^{-1}A_q\otimes_{A_q}\Dqab\cong \Dqab\otimes_{A_q}\CS_1^{-1}A_q
 \cong
\CS_1^{-1}A_q\otimes_{A_q}\Dqab\otimes_{A_q}\CS_1^{-1}A_q,
 \]
and hence $\CS_1^{-1}A_q$ is a $\Lambda$-graded $\CS_1^{-1}\Dqab$-module.
By \cite[Lemma 5.7]{TBB}
the canonical homomorphism
\begin{equation}
\CS_1^{-1}\Dqab\to\End_\BF(\CS_1^{-1}A_q)
\end{equation}
is injective.
We sometimes regard $\CS_1^{-1}\Dqab$ 
as a subalgebra of $\End_\BF(\CS_1^{-1}A_q)$.

The subalgebra $(\CS_1^{-1}\Dqab)(0)$ 
of $\End_\BF(\CS_1^{-1}A_q)$
is generated by
${\ell}_\varphi, {\deru}_u,{\sigma}_{\lambda}\in\End_\BF(\CS_1^{-1}A_q)$ for
$\varphi\in (\CS_1^{-1}A_q)(0)$, $u\in U_q(\Gg)^a$, $\lambda\in b\Lambda$.
We have
\begin{align}
\label{eq:action1}
{\ell}_\varphi(\psi)=&\varphi\psi
\qquad&(\varphi\in (\CS_1^{-1}A_q)(0), \psi\in \CS_1^{-1}A_q),
\\
\label{eq:action2}
{\sigma}_{\lambda}(\psi)=&q^{(\lambda,\nu)}\psi
\qquad&(\lambda\in b\Lambda, \psi\in (S^{-1}A)(\nu)).
\end{align}
An  explicit description of ${\deru}_u\in\End_\BF(\CS_1^{-1}A_q)$ for $u=k_\lambda, e_i, f_i$ is obtained using
\begin{align}
\label{eq:action3}
{\deru}_u(\psi\psi')=&
\sum_{(u)}
{\deru}_{u_{(0)}}(\psi)
{\deru}_{u_{(1)}}(\psi')
\qquad
&(\psi, \psi'\in \CS_1^{-1}A_q).
\end{align}
For example, for $c\in\CS_1$ we have
\[
1=\deru_{k_\lambda}(1)=
{\deru}_{k_\lambda}(cc^{-1})
={\deru}_{k_\lambda}(c)
{\deru}_{k_\lambda}(c^{-1})
\]
and hence 
${\deru}_{k_\lambda}(c^{-1})=
({\deru}_{k_\lambda}(c))^{-1}$.
Therefore, for $c\in\CS_1$ and $\psi\in A_q$ we have
\[
{\deru}_{k_\lambda}(c^{-1}\psi)=
{\deru}_{k_\lambda}(c^{-1})
{\deru}_{k_\lambda}(\psi)
=
({\deru}_{k_\lambda}(c))^{-1}
{\deru}_{k_\lambda}(\psi).
\]
\begin{proposition}
\label{prop:Rel2}
\begin{itemize}
\item[(i)]
For $\lambda\in\Lambda^+$ we have
\[
{\deru}_{k_{2\lambda}}
{\sigma}_{-2\lambda}
=
\sum_r{\ell}_{c_\lambda^{-1}(y_r\cdot c_\lambda)}{\deru}_{x_r}
\in
\langle{\ell}_\varphi, {\deru}_x
\mid
\varphi\in(\CS_1^{-1}A_q)(0), 
x\in U_q(\Gn^+)
\rangle.
\]
\item[(ii)]
For $y\in\tilde{U}_q(\Gn^-)$ we have
\begin{align*}
{\deru}_y\in&
\langle{\ell}_\varphi, {\deru}_x, 
{\deru}_{k_{2\lambda}},{\sigma}_{2\mu}
\mid
\varphi\in(\CS_1^{-1}A_q)(0), 
x\in U_q(\Gn^+), 
\lambda, \mu\in \Lambda
\rangle.
\end{align*}
\end{itemize}
\end{proposition}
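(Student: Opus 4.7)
I plan to apply Proposition~\ref{prop:Rel1} with $\varphi = c_\lambda \in A_q(\lambda)_\lambda$. Since $c_\lambda$ is the highest weight vector of the irreducible $U_q(\Gg)$-module $A_q(\lambda)$, one has $(Sx_r)\cdot c_\lambda = 0$ whenever $\beta_r > 0$: any expansion of $Sx_r$ in terms of $S(e_i) = -k_i^{-1}e_i$ leaves at least one $e_i$ acting on $c_\lambda$. Hence only the term with $x_r = y_r = 1$, $\beta_r = 0$ survives on the right-hand side of \eqref{eq:Rel1}, which collapses to $\ell_{c_\lambda}$, giving
\[
\Bigl(\sum_r \ell_{y_r \cdot c_\lambda}\,\deru_{x_r}\Bigr)\deru_{k_{-2\lambda}}\,\sigma_{2\lambda} \;=\; \ell_{c_\lambda}
\]
in $D_q^{22}$. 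Multiplying on the left by $\ell_{c_\lambda}^{-1} = \ell_{c_\lambda^{-1}}$ and on the right by $(\deru_{k_{-2\lambda}}\sigma_{2\lambda})^{-1} = \deru_{k_{2\lambda}}\sigma_{-2\lambda}$ in $\CS_1^{-1}D_q^{22}$ (the commutativity being \eqref{eq:rel5}) produces the claimed identity. The inclusion $c_\lambda^{-1}(y_r\cdot c_\lambda) \in (\CS_1^{-1}A_q)(0)$ is automatic because $y_r\cdot c_\lambda \in A_q(\lambda)$.

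\textbf{Part (ii).} For this part I would induct on $\Ht(\gamma)$, $\gamma \in Q^+$, proving that $\deru_{yk_\gamma}$ lies in the target subalgebra (call it $\CA$) for every $y \in U_q(\Gn^-)_{-\gamma}$; the case $\gamma = 0$ is trivial. For the inductive step, apply Proposition~\ref{prop:Rel1} with $\varphi = y'\cdot c_\lambda$ for $y' \in U_q(\Gn^-)_{-\gamma}$ and $\lambda \in \Lambda^+$ chosen deep in the dominant chamber, and multiply the resulting identity on the left by $\ell_{c_\lambda^{-1}}$. All $\ell$-arguments then become grade-zero elements of $\CS_1^{-1}A_q$, and the factors $\deru_{x_r}$, $\deru_{k_{-2(\lambda - \gamma)}}$, $\sigma_{2\lambda}$ on the right-hand side are admissible generators of $\CA$, so the right-hand side lies in $\CA$. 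On the left, summands indexed by $r$ with $\beta_r \not\le \gamma$ vanish for weight reasons (their image in $A_q(\lambda)$ has weight outside the support), and summands with $\beta_r < \gamma$ lie in $\CA$ by the induction hypothesis. What survives is
\[
\sum_{r:\,\beta_r = \gamma} M(x_r, y')\,\deru_{y_r k_\gamma} \;\in\; \CA,
\]
where the scalar $M(x_r, y') \in \BF$ is defined by $(Sx_r)\cdot(y'\cdot c_\lambda) = M(x_r, y')\,c_\lambda$ (using $\dim A_q(\lambda)_\lambda = 1$).

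The main obstacle is then to show that, as $y'$ ranges over a basis $\{y'_j\}$ of $U_q(\Gn^-)_{-\gamma}$, the resulting $d\times d$ scalar matrix $\bigl(M(x_r, y'_j)\bigr)$ is invertible over $\BF$ for $\lambda$ sufficiently dominant. This is non-degeneracy of a twisted Shapovalov-type pairing on $A_q(\lambda)$ at depth $\gamma$. For $\lambda$ with $\langle\lambda,\alpha_i^\vee\rangle$ large enough relative to $\Ht(\gamma)$ for each $i$, the map $y'\mapsto y'\cdot c_\lambda$ identifies $U_q(\Gn^-)_{-\gamma}$ with $A_q(\lambda)_{\lambda-\gamma}$; then passing $(Sx_r)y'_j$ through the triangular decomposition $U_q(\Gg) = U_q(\Gn^-)\otimes U_q(\Gh)\otimes U_q(\Gn^+)$, extracting the $U_q(\Gh)$-component, and evaluating at $\chi_\lambda$ shows that $M(x_r, y'_j)$ agrees up to sign and a nonzero $\lambda$-dependent factor with the Drinfeld pairing $\tau(x_r, y'_j)$ from \eqref{eq:Dr}. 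Since $\tau|_{U_q(\Gn^+)_\gamma \times U_q(\Gn^-)_{-\gamma}}$ is non-degenerate, so is our scalar matrix; inverting it expresses each $\deru_{y_r k_\gamma}$ as an $\BF$-linear combination of elements of $\CA$, which closes the induction.
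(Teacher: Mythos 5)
Part (i) of your proposal is correct and is exactly the paper's argument: put $\varphi=c_\lambda$ in \eqref{eq:Rel1}, observe that $(Sx_r)\cdot c_\lambda=0$ unless $\beta_r=0$ so the right-hand side collapses to $\ell_{c_\lambda}$, and then invert $\ell_{c_\lambda}$ and $\deru_{k_{-2\lambda}}\sigma_{2\lambda}$ in $\CS_1^{-1}D_q^{22}$.

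For part (ii) there is a genuine gap, and also a missed simplification. The paper's route is much shorter: since the target $\CA$ is a subalgebra and $u\mapsto\deru_u$ is multiplicative, it suffices to treat the algebra generators $y=f_ik_i$ of $\tilde{U}_q(\Gn^-)$; a single application of \eqref{eq:Rel1} with $\varphi=f_i\cdot c_\nu$, $\langle\nu,\alpha_i^\vee\rangle>0$, then works, because weight considerations leave only the terms with $\beta_r\in\{0,\alpha_i\}$, the space $U_q(\Gn^+)_{\alpha_i}=\BF e_i$ is one-dimensional, and $(Se_i)\cdot(f_i\cdot c_\nu)$ is an explicitly nonzero multiple of $c_\nu$ -- no non-degeneracy theorem is needed. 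Your height induction could in principle be made to work, but the step you rely on is false as stated: the matrix $\bigl(M(x_r,y'_j)\bigr)$ does \emph{not} agree entrywise, up to sign and a common nonzero $\lambda$-dependent factor, with the Drinfeld pairing $\bigl(\tau(x_r,y'_j)\bigr)$. The latter is a $\lambda$-independent invertible matrix (the identity, for dual bases), whereas the former is a Shapovalov-type matrix whose entries are genuinely different functions of $\lambda$; already for $\Gg=\Gsl_3$ and $\gamma=\alpha_1+\alpha_2$ its classical limit in the monomial bases has entries $\lambda_1\lambda_2$, $\lambda_1(\lambda_2+1)$, $\lambda_2(\lambda_1+1)$, $\lambda_1\lambda_2$ with $\lambda_i=\langle\lambda,\alpha_i^\vee\rangle$, so the ratios of entries depend on $\lambda$ and no common scalar can relate it to a constant matrix. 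The invertibility you need is nevertheless true for $\lambda$ sufficiently dominant, but it requires either a careful leading-term (top-degree in $q$) argument, or, more simply, the observation that $y'\mapsto y'\cdot c_\lambda$ is then a bijection onto $A_q(\lambda)_{\lambda-\gamma}$ and irreducibility of $A_q(\lambda)$ forces the resulting square pairing matrix to be non-degenerate. As written, the justification of the pivotal invertibility is incorrect, and the whole machinery is avoidable by the generator reduction above.
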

\begin{proof}
Setting $\varphi=c_\lambda$ in \eqref{eq:Rel1} we obtain
\[
\left(
\sum_r\ell_{y_r\cdot c_\lambda}\deru_{x_r}
\right)
\deru_{k_{-2\lambda}}\sigma_{2\lambda}
=
\ell_{c_\lambda}.
\]
Hence (i) holds.
To show (ii) we may assume that $y=f_ik_i$ for some $i\in I$.
Then the desired result follows by 
setting $\varphi=f_i\cdot c_\nu$ for $\nu\in\Lambda^+$ satisfying $\langle\nu,\alpha_i^\vee\rangle>0$
in \eqref{eq:Rel1}.
Details are left to the readers.
\end{proof}
It follows that for $a, b=1, 2$ we have
\begin{equation}
\label{eq:gen}
(\CS_1^{-1}\Dqab)(0)=
\langle
{\ell}_\varphi, {\deru}_x, 
{\deru}_{k_{a\lambda}}, {\sigma}_{b\mu}
\mid
\varphi\in(\CS_1^{-1}A_q)(0), x\in U_q(\Gn^+), 
\lambda, \mu\in\Lambda
\rangle.
\end{equation}
\subsection{}
Note that $O_q(B^+)$ is a $\Lambda$-graded algebra by the grading
\[
O_q(B^+)=\bigoplus_{\nu\in\Lambda}O_q(B^+)(\nu)
\]
given by
\[
O_q(B^+)(\nu)=
\{\xi\in O_q(B^+)\mid \xi\cdot h=\chi_\nu(h)\xi\;(h\in U_q(\Gh))\}.
\]
Since the  composite of the algebra homomorphisms
\[
A_q\hookrightarrow O_q(G)\xrightarrow{\res}O_q(B^+)
\]
sends $c_\lambda$ for $\lambda\in\Lambda^+$ to the invertible element $\pi^*(\chi_\lambda)\in O_q(B^+)$, 
we obtain an algebra homomorphism
\begin{equation}
\label{eq:theta}
\theta:\CS_1^{-1}A_q\to O_q(B^+).
\end{equation}
In fact $\theta$ is an  isomorphism of $\Lambda$-graded algebras (see \cite[Proposition 4.5]{TBB}).

Set
\[
U_q(\Gn^+)^\bigstar=
\bigoplus_{\gamma\in Q^+}
(U_q(\Gn^+)_{\gamma})^*\subset U_q(\Gn^+)^*.
\]
We have a  linear isomorphism
\begin{equation}
\label{eq:iota}
\iota:U_q(\Gn^+)^\bigstar\xrightarrow{\sim} O_q(B^+)(0)
\end{equation}
given by 
\[
\langle\iota(\xi),hx\rangle
=\varepsilon(h) \xi(x)
\qquad(\xi\in U_q(\Gn^+)^\bigstar,  h\in U_q(\Gh), x\in U_q(\Gn^+)).
\]
Note that $O_q(B^+)(0)$ is a 
subalgebra as well as a 
left $U_q(\Gb^+)$-submodule 
of $O_q(B^+)$.
We regard $U_q(\Gn^+)^\bigstar$ as an $\BF$-algebra and a left $U_q(\Gb^+)$-module via the identification \eqref{eq:iota}.
The identity element of $U_q(\Gn^+)^\bigstar$ is given by
$
\id_{U_q(\Gn^+)^\bigstar}=\varepsilon|_{U_q(\Gn^+)}
$.
\begin{remark}
\label{rem:D}
Using \eqref{eq:Dr} we see easily that $U_q(\Gn^+)^\bigstar$ is naturally isomorphic to the algebra $U_q(\Gn^-)^\op$ opposite to $U_q(\Gn^-)$.
\end{remark}

The multiplication of $O_q(B^+)$ gives the 
isomorphism
\begin{equation}
\label{eq:OB}
\jmath:
U_q(\Gn^+)^\bigstar\otimes O_q(H)
\xrightarrow{\sim} 
O_q(B^+)
\quad(\xi\otimes\chi\mapsto\iota(\xi)\pi^*(\chi))
\end{equation}
of $\BF$-modules.
For 
$\xi\in U_q(\Gn^+)^\bigstar$, $\chi\in O_q(H)$
we have
\[
\langle\jmath(\xi\otimes\chi),hx\rangle
=\langle\chi,h\rangle \xi(x)
\qquad(h\in U_q(\Gh), x\in U_q(\Gn^+)).
\]

We obtain an isomorphism
\begin{equation}
\label{eq:bt}
\vartheta=\jmath^{-1}\circ\theta:
\CS_1^{-1}A_q\to U_q(\Gn^+)^\bigstar\otimes O_q(H)
\end{equation}
of $\BF$-modules.
It satisfies
\begin{equation}
\vartheta((\CS_1^{-1}A_q)(\nu))=
U_q(\Gn^+)^\bigstar \otimes \chi_\nu
\qquad(\nu\in\Lambda).
\end{equation}
In particular, we have an isomorphism
\begin{equation}
\label{eq:bt0}
\vartheta_0:
(\CS_1^{-1}A_q)(0)\to U_q(\Gn^+)^\bigstar
\end{equation}
of $\BF$-algebras and $U_q(\Gb^+)$-modules 
satisfying $\vartheta(\varphi)=\vartheta_0(\varphi)\otimes1$ for $\varphi\in (\CS_1^{-1}A_q)(0)$.

We denote by 
$\BD_q$ the subalgebra of 
$\End_\BF(U_q(\Gn^+)^\bigstar\otimes O_q(H))$
corresponding to $(\CS_1^{-1}D_q)(0)\subset \End_\BF(\CS_1^{-1}A_q)$ under the isomorphism \eqref{eq:bt}.
In fact $\BD_q$ is a subalgebra of 
\[
\End_\BF(U_q(\Gn^+)^\bigstar)\otimes
\End_\BF(O_q(H))
\subset
\End_\BF(U_q(\Gn^+)^\bigstar\otimes O_q(H))
\]
by the following.
For $\psi\in \CS_1^{-1}A_q$ 
we have
\begin{subequations}
\label{subeq:ta}
\begin{align}
\vartheta(\sigma_\lambda(\psi))=&
(1\otimes \Bs_\lambda)(\vartheta(\psi))
&(\lambda\in\Lambda),
\\
\vartheta(\ell_\varphi(\psi))=&
(\Bm_{\vartheta_0(\varphi)}\otimes 1)(\vartheta(\psi))
&(\varphi\in(\CS_1^{-1}A_q)(0)),
\\
\vartheta(\deru_x(\psi))=&(\Bd_x\otimes1)(\vartheta(\psi))
&(x\in U_q(\Gn^+)),
\\
\vartheta(\deru_{k_\lambda}(\psi))=&
(\Bt_{-\lambda}\otimes \Bs_\lambda)(\vartheta(\psi))
&(\lambda\in\Lambda),
\end{align}
\end{subequations}
where 
\begin{subequations}
\begin{align}
\Bs_\lambda(\chi_\mu)=&q^{(\lambda,\mu)}\chi_\mu
\qquad&(\lambda, \mu\in\Lambda),
\\
\Bm_{\xi}(\xi')=&\xi\xi'
&(\xi. \xi'\in U_q(\Gn^+)^\bigstar),
\\
(\Bd_{x}(\xi))(x')=&\xi(x'x)
&(\xi\in U_q(\Gn^+)^\bigstar, x, x'\in U_q(\Gn^+)),
\\
\Bt_{\lambda}(\xi)=&q^{(\gamma,\lambda)}\xi&(\lambda\in\Lambda, \gamma\in Q^+, \xi\in (U_q(\Gn^+)_\gamma)^*).
\end{align}
\end{subequations}

We have
\begin{subequations}
\label{subeq:relA}
\begin{align}
&\Bm_{1}=\Bd_1=\Bt_0=\id,
\\
&\Bm_\xi\Bm_{\xi'}=\Bm_{\xi\xi'}
&(\xi, \xi'\in U_q(\Gn^+)^\bigstar),
\\
&\Bd_x\Bd_{x'}=\Bd_{xx''}
&(x, x'\in U_q(\Gn^+)),
\\
&\Bt_\lambda\Bt_\mu=\Bt_{\lambda+\mu}
&(\lambda, \mu\in\Lambda),
\\
\label{eq:relA5}
&\Bd_x\Bm_\xi=\sum_{(x)}\Bm_{x_{(0)}\cdot \xi}\Bd_{x_{(1)}}
&(x\in U_q(\Gn^+), \xi\in U_q(\Gn^+)^\bigstar),
\\
&\Bt_\lambda\Bd_x=q^{-(\gamma,\lambda)}\Bd_x\Bt_\lambda
&(x\in U_q(\Gn^+)_\gamma, \lambda\in\Lambda),
\\
&\Bt_\lambda\Bm_\xi=q^{(\gamma,\lambda)}\Bm_\xi\Bt_\lambda
&(\xi\in (U_q(\Gn^+)_\gamma)^*, \lambda\in\Lambda).
\end{align}
\end{subequations}
Here, \eqref{eq:relA5} makes sense by 
$
\Delta(U_q(\Gn^+))
\subset 
U_q(\Gb^+)\otimes U_q(\Gn^+)
$.
For $i\in I$ define $\xi_i\in (U_q(\Gn^+)_{\alpha_i})^*$ by 
$\xi_i(e_i)=1$.
By \eqref{eq:relA5} we obtain the relation 
\begin{equation}
\Bd_{e_i}\Bm_{\xi_j}
-q^{-(\alpha_i,\alpha_j)}
\Bm_{\xi_j}\Bd_{e_i}
=\delta_{ij}\qquad(i,j \in I).
\end{equation}

\subsection{}
For $a, b=1, 2$ we denote by 
$\BDqab$ the subalgebra of 
$\BD_q$
corresponding to $(\CS_1^{-1}\Dqab)(0)\subset (\CS_1^{-1}D_q)(0)$.
Set
\begin{align}
\BB_{q}=&\langle
\Bm_\xi, \Bd_x\mid
\xi\in U_q(\Gn^+)^\bigstar, 
x\in U_q(\Gn^+)
\rangle
\subset \End_\BF(U_q(\Gn^+)^\bigstar),
\\
\BE^c_q=&\langle\BB_q, \Bt_{\lambda}\mid\lambda\in c\Lambda\rangle
\subset \End_\BF(U_q(\Gn^+)^\bigstar)
\qquad&(c=1, 2),
\\
\BF[\Bs_{b\Lambda}]=&
\bigoplus_{\lambda\in b\Lambda}\BF\Bs_\lambda
\subset\End_\BF(O_q(H))
\qquad&(b=1, 2).
\end{align}
By \eqref{subeq:ta} we have
\begin{subequations}
\label{subeq:genB}
\begin{align}
\label{eq:genB1}
\BD_q=\BD_q^{11}
=&\BE_q^1\otimes\BF[\Bs_\Lambda]
,
\\
\label{eq:genB2}
\BD_q^{21}
=&\BE_q^2\otimes\BF[\Bs_\Lambda],
\\
\label{eq:genB3}
\BD_q^{12}
=&
\langle
\BB_q\otimes1, \Bt_\lambda\otimes\Bs_{-\lambda},
1\otimes\Bs_{2\mu}
\mid
\lambda, \mu\in\Lambda\rangle,
\\
\label{eq:genB4}
\BD_q^{22}
=&\BE_q^2\otimes\BF[\Bs_{2\Lambda}].
\end{align}
\end{subequations}

The algebra $\BB_q$ already appeared in \cite{Jo0}.
It also appeared in \cite{Kas} as a $q$-analogue of boson.
It is also called the quantum Weyl algebra in \cite{JoB}.

The following results are proved in \cite{Jo0}.
We include proofs here for the sake of readers.
\begin{proposition}
\label{prop:B}
\begin{itemize}
\item[(i)]
The multiplication of $\BB_q$ gives an isomorphism
\[
U_q(\Gn^+)^\bigstar\otimes U_q(\Gn^+)
\xrightarrow{\sim} \BB_q
\qquad(\xi\otimes x\mapsto\Bm_\xi\Bd_x)
\]
of $\BF$-modules.
\item[(ii)]
The algebra $\BB_q$ is an integral domain in the sense that 
$xy=0$ for $x,y\in \BB_q$ implies $x=0$ or $y=0$.
\item[(iii)]
The set $\BB_q^\times$ of  invertible elements in $\BB_q$ coincides with $\BF^\times$.
\end{itemize}
\end{proposition}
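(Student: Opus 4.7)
My plan is to establish (i)--(iii) via a natural $\BZ_{\geq0}$-filtration on $\BB_q$, where $F_n\BB_q$ is the span of products of generators $\Bm_{\xi_j},\Bd_{x_j}$ in which the total height $\sum_j\Ht(\wt(x_j))$ of the $\Bd$-factors is at most $n$. The associated graded $\gr_\bullet\BB_q$ will turn out to be a $q$-twisted tensor product of $U_q(\Gn^+)^\bigstar$ and $U_q(\Gn^+)$.

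For (i), surjectivity of $\xi\otimes x\mapsto\Bm_\xi\Bd_x$ is immediate by iterating \eqref{eq:relA5}, since $\Delta(U_q(\Gn^+))\subset U_q(\Gb^+)\otimes U_q(\Gn^+)$ guarantees that every term $\Bm_{x_{(0)}\cdot\xi}\Bd_{x_{(1)}}$ on the right-hand side is already in normal form. For injectivity I would suppose $P=\sum_i\Bm_{\xi_i}\Bd_{x_i}$ vanishes as an operator on $U_q(\Gn^+)^\bigstar$, with the $x_i$ chosen from a $Q^+$-homogeneous basis of $U_q(\Gn^+)$. Pick a minimal $\gamma_{\min}$ of $\{\wt(x_i)\}$ in the partial order on $Q^+$ and evaluate $P$ on an arbitrary $\eta\in(U_q(\Gn^+)_{\gamma_{\min}})^*$. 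A weight count forces $\Bd_{x_i}(\eta)=0$ unless $\wt(x_i)=\gamma_{\min}$; and for such $i$ the identity $(\Bd_{x_i}\eta)(x')=\eta(x'x_i)$ collapses---since $x'$ is restricted to weight $0$---to $\Bd_{x_i}(\eta)=\eta(x_i)\cdot\id_{U_q(\Gn^+)^\bigstar}$. Thus $\sum_{\wt(x_i)=\gamma_{\min}}\eta(x_i)\,\xi_i=0$ for every such $\eta$; linear independence of the relevant $x_i$ gives $\xi_i=0$ for those $i$, and induction on the remaining weights concludes.

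For (ii), the key technical input---verified by induction on the generators $e_i$ of $U_q(\Gn^+)$---is that for $x\in U_q(\Gn^+)_\gamma$, the component of $\Delta(x)$ in $U_q(\Gh)\otimes U_q(\Gn^+)_\gamma$ equals precisely $k_\gamma\otimes x$. Combined with \eqref{eq:relA5}, this yields $\Bd_x\Bm_\xi\equiv c\cdot\Bm_\xi\Bd_x\pmod{F_{n-1}\BB_q}$ for pure-weight $x,\xi$, where $c\in\BF^\times$ is the eigenvalue of $k_\gamma$ on $\xi$. Hence $\gr_\bullet\BB_q$ is isomorphic, as a vector space, to $U_q(\Gn^+)^\bigstar\otimes U_q(\Gn^+)$ with $q$-commuting factors. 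Since both factors are integral domains---$U_q(\Gn^+)$ by PBW and $U_q(\Gn^+)^\bigstar\cong U_q(\Gn^-)^\op$ by Remark~\ref{rem:D}---their nonzero-scalar $q$-twist is again a domain, and the standard filtered-to-graded lifting yields (ii) for $\BB_q$ itself.

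For (iii), if $P\in\BB_q^\times$ with $PQ=1$ and $P\in F_p\setminus F_{p-1}$, $Q\in F_q\setminus F_{q-1}$, then the product of the leading terms of $P,Q$ in $\gr_\bullet\BB_q$ (nonzero by (ii)) represents $1\in F_0$, forcing $p+q=0$ and hence $p=q=0$. Therefore $P\in F_0$, which under (i) is the embedded copy of $U_q(\Gn^+)^\bigstar$. Rerunning the same leading-term argument inside the $Q^+$-graded domain $U_q(\Gn^+)^\bigstar$---whose degree-zero piece is $\BF$---then forces $P\in\BF^\times$. The main obstacle throughout is the careful verification that the ``error'' terms in \eqref{eq:relA5} really lie in $F_{n-1}\BB_q$; once that triangular structure of $\Delta$ on $U_q(\Gn^+)$ is pinned down, all three conclusions follow by standard manipulations with filtered-graded domains.
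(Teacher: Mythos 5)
Your part (i) is correct and is essentially the paper's own argument: evaluate the putative zero operator on a linear form supported on a minimal weight, so that all $\Bd_{x_i}$ of non-minimal weight kill it and the minimal-weight ones act by $\eta(x_i)\id$. Your setup for (ii)--(iii) is also sound up to a point: the triangularity of $\Delta$ on $U_q(\Gn^+)$ (component $k_\gamma\otimes x$ in top weight) is true, and it does give $\Bd_x\Bm_\xi\equiv q^{-(\gamma,\delta)}\Bm_\xi\Bd_x$ modulo terms of strictly smaller $\Bd$-height, so your height filtration is multiplicative and $\gr\BB_q$ is the bicharacter-twisted tensor product of $U_q(\Gn^+)^\bigstar$ and $U_q(\Gn^+)$. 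The genuine gap is the next sentence: you conclude that this twisted tensor product is a domain ``since both factors are integral domains.'' That inference is false over a non-algebraically-closed field such as $\BF=\BQ(q^{1/N})$, even for $Q^+$- or $\BZ_{\geqq0}$-graded domains and nonzero scalar $q$-twists. For instance, take $A=\BF[x,y]/(x^2-2y^2)$, a graded commutative domain (irreducible since $\sqrt2\notin\BQ(q^{1/N})$) with $A_0=\BF$, and twist $A\otimes A$ by $(a\otimes b)(a'\otimes b')=q^{\deg b\,\deg a'}aa'\otimes bb'$; then $(x\otimes y-y\otimes x)(x\otimes y+y\otimes x)=q\,(x^2\otimes y^2-y^2\otimes x^2)=0$ although both factors are nonzero. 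So ``two graded domains $\Rightarrow$ their $q$-twist is a domain'' cannot be invoked, and since your proofs of (ii) and of (iii) (whose leading-term step needs exactly this graded-domain property) rest on it, they are incomplete at this point; contrary to your closing remark, the error-term verification is the easy part and this is the real obstacle.

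This is precisely where the paper's proof does more work: instead of filtering only by the total height of the $\Bd$-part, it filters by the full exponent vector of the PBW monomials $e_{\beta_1}^{k_1}\cdots e_{\beta_L}^{k_L}$ with the De Concini--Kac--Procesi total order (total height first, then lexicographically). In that associated graded the $\Bd$-generators themselves $q$-commute, $\gr(\Bd_{e_{\beta_s}})\gr(\Bd_{e_{\beta_t}})=q^{(\beta_s,\beta_t)}\gr(\Bd_{e_{\beta_t}})\gr(\Bd_{e_{\beta_s}})$ for $s<t$, and they $q$-commute with the $\Bm_\xi$'s; hence the graded algebra is an iterated extension of $U_q(\Gn^+)^\bigstar$ by $q$-commuting variables, the top term of a product is computed explicitly, and (ii),(iii) reduce to the same statements for the single factor $U_q(\Gn^+)^\bigstar\cong U_q(\Gn^-)^{\op}$ (Remark \ref{rem:D}). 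Your argument can be repaired in the same spirit: either refine your filtration to the DKP one, or justify the domain property of your twisted tensor product by citing the Levendorskii--Soibelman/DKP straightening relations, which exhibit it as an iterated Ore extension (by automorphisms) of the domain $U_q(\Gn^+)^\bigstar$. Without such an input specific to $U_q(\Gn^+)$, the key step of (ii) does not stand.
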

\begin{proof}
(i) The surjectivity follows from \eqref{subeq:relA}.
Hence it is sufficient to show the injectivity.
Take a basis $\{z_r\}$ of $U_q(\Gn^+)$ and $\gamma_r\in Q^+$ such that $z_r\in U_q(\Gn^+)_{\gamma_r}$.
Assume that there exists some non-zero element $\sum_r\xi_r\otimes z_r\in U_q(\Gn^+)^\bigstar\otimes U_q(\Gn^+)$ such that $\sum_r\Bm_{\xi_r}\Bd_{z_r}=0$.
Take $r_0$ such that $\xi_{r_0}\ne0$ and 
$\gamma_{r_0}-\gamma_r\notin Q^+$
for any $r\ne r_0$ satisfying $\xi_r\ne0$.
Define $\xi\in(U_q(\Gn^+)_{\gamma_{r_0}})^*$
by $\xi(z_r)=\delta_{rr_0}$.
Then we have $(\sum_r\Bm_{\xi_r}\Bd_{z_r})(\xi)=\xi_{r_0}\ne0$, which is a contradiction.
(i) is proved.

(ii), (iii) 
Recall that the monomials
$
e_{\beta_1}^{k_1}\dots e_{\beta_L}^{k_L}
$
for $(k_1,\dots, k_L)\in\BZ_{\geqq0}^L$ 
form a basis of $U_q(\Gn^+)$
(see \eqref{eq:root} for the notation).
As in \cite{DKP} 
we define a total order on $\BZ_{\geqq0}^L$ as follows.
For $\beta=\sum_{i\in I}m_i\alpha_i\in Q^+$ set
$\Ht(\beta)=\sum_i m_i$.
Then we have 
$(k_1,\dots, k_L)>(r_1,\dots, r_L)$ 
if and only if $\sum_tk_t\Ht(\beta_t)
>\sum_tr_t\Ht(\beta_t)$
or if 
$\sum_tk_t\Ht(\beta_t)
=\sum_tr_t\Ht(\beta_t)$
and there exists sum $t_0$ such that $k_{t}=r_t$ for $t=1, \dots, t_0-1$ and $k_{t_0}>r_{t_0}$.
For each $(k_1,\dots, k_L)\in\BZ_{\geqq0}^L$ 
set
\[
\BB_{q,\leqq(k_1,\dots, k_L)}
=
\sum_{(r_1,\dots, r_L)\leqq(k_1,\dots, k_L), \xi\in U_q(\Gn^+)^\bigstar}
\Bm_\xi\Bd_{e_{\beta_1}^{r_1}\dots e_{\beta_L}^{r_L}}\subset \BB_q.
\]
Then we have 
\[
\BB_{q,\leqq(k_1,\dots, k_L)}
\BB_{q,\leqq(k'_1,\dots, k'_L)}
\subset
\BB_{q,\leqq(k_1+k_1',\dots, k_L+k_L')}.
\]
Moreover, in the associated graded algebra 
$\gr\BB_q$ we have
\[
\gr(\Bd_{e_{\beta_s}})\gr(\Bd_{e_{\beta_t}})
=q^{(\beta_s,\beta_t)}
\gr(\Bd_{e_{\beta_t}})\gr(\Bd_{e_{\beta_s}})
\qquad(s<t),
\]
\[
\gr(\Bd_{e_{\beta_t}})\gr(\Bm_\xi)=
\gr(\Bm_{\Bt_{-\beta}(\xi)})
\gr(\Bd_{e_{\beta_t}})
\qquad(1\leqq t\leqq L,\xi\in U_q(\Gn^+)^\bigstar).
\]
Therefore, for non-zero elements $z, z'\in\BB_q$ with top terms 
$\Bm_\xi\Bd_{e_{\beta_1}^{k_1}\dots e_{\beta_L}^{k_L}}$, 
$\Bm_{\xi'}\Bd_{e_{\beta_1}^{r_1}\dots e_{\beta_L}^{r_L}}$ respectively, the top term of 
$zz'$ is given by 
$\Bm_{\xi(\Bt_{-k_1\beta_1-\dots k_L\beta_L}(\xi'))}\Bd_{e_{\beta_1}^{k_1+r_1}\dots e_{\beta_L}^{k_L+r_L}}$.
Hence our assertions (ii), (iii) for $\BB_q$ follow from the similar assertions for $U_q(\Gn^+)^\bigstar$.
By Remark \ref{rem:D}
they are equivalent to those for $U_q(\Gn^+)$, which are well-known
(It is also possible to show them by repeating our argument for $\BB_q$ above for the subalgebra of $\BB_q$ isomorphic to $U_q(\Gn^+)$).
\end{proof}

\section{Certain subalgebras}
\subsection{}
For $b=1, 2$ we set
\begin{equation}
\BF[\sigma_{b\Lambda}]
=\sum_{\lambda\in\Lambda}\BF\sigma_{b\lambda}
\subset D_q^{2b}
\subset D_q^{1b}.
\end{equation}
It is a subalgebra of $D_q^{2b}$ isomorphic to the group algebra of the abelian group $b\Lambda$.
\begin{theorem}
\label{thm:1}
\begin{itemize}
\item[(i)]
For $a=1, 2$ the canonical maps
\begin{equation}
\label{eq:thm1a}
\BF[\sigma_\Lambda]\otimes_{\BF[\sigma_{2\Lambda}]}
D_q^{a2}
\to D_q^{a1},
\qquad
D_q^{a2}
\otimes_{\BF[\sigma_{2\Lambda}]}
\BF[\sigma_\Lambda]
\to D_q^{a1}
\end{equation}
are bijective.
\item[(ii)]
For $b=1, 2$
the canonical maps
\begin{equation}
\label{eq:thm1b}
U_q(\Gg)\otimes_{U_q^e(\Gg)}
D_q^{2b}
\to D_q^{1b},
\qquad
D_q^{2b}
\otimes_{U_q^e(\Gg)}
U_q(\Gg)
\to D_q^{1b}
\end{equation}
with respect to the algebra homomorphisms
\[
U_q(\Gg)\to D_q^{1b}, \quad
U^e_q(\Gg)\to D_q^{2b}
\qquad(u\mapsto \deru_u)
\]
are bijective.
\end{itemize}
\end{theorem}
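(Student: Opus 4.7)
Our strategy is to exploit the tensor-product descriptions \eqref{subeq:genB} of $\BD_q^{ab}$: the analogous direct-sum decompositions are transparent at the $\BD_q$-level, and can be lifted to $D_q^{ab}$ via the embedding $D_q^{ab}\hookrightarrow\CS_1^{-1}D_q^{ab}$ together with left-multiplication by the invertible element $c_\nu^{-1}$. Statements (i) and (ii) are handled in parallel, differing only in which tensor factor of $\BD_q$ is being decomposed modulo $2\Lambda$.

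For surjectivity in (i), relations \eqref{eq:rel5}--\eqref{eq:rel6} let us commute $\sigma_\lambda$ past $\deru_u$ and $\ell_\varphi$ up to unit scalars; writing each $\sigma_\lambda=\sigma_{\lambda_0}\sigma_{2\mu}$ with $\lambda_0$ in a fixed set $\Gamma$ of coset representatives of $\Lambda/2\Lambda$ and $\sigma_{2\mu}\in D_q^{a2}$, any monomial in the generators of $D_q^{a1}$ can be brought into the form $\sigma_{\lambda_0}P$ or $P\sigma_{\lambda_0}$ with $P\in D_q^{a2}$. Surjectivity in (ii) is analogous: using \eqref{eq:UUe} write $\deru_u=\deru_{k_\lambda}\deru_{u'}$ with $u'\in U_q^e(\Gg)$, and note that $\deru_{k_\lambda}$ commutes with $\ell_\varphi$, $\deru_{u'}$, and $\sigma_\mu$ up to unit scalars.

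For injectivity, first observe that the decompositions hold at the $\BD_q$-level: a mild extension of the PBW argument in Proposition \ref{prop:B}(i) covering the generators $\Bt_\lambda$ yields $\BE_q^1=\bigoplus_{\lambda_0\in\Gamma}\Bt_{\lambda_0}\BE_q^2$, and trivially $\BF[\Bs_\Lambda]=\bigoplus_{\lambda_0\in\Gamma}\Bs_{\lambda_0}\BF[\Bs_{2\Lambda}]$; combined with \eqref{subeq:genB} these give $\BD_q^{a1}=\bigoplus_{\mu_0\in\Gamma}\Bs_{\mu_0}\BD_q^{a2}$ and $\BD_q^{1b}=\bigoplus_{\lambda_0\in\Gamma}(\Bt_{-\lambda_0}\otimes\Bs_{\lambda_0})\BD_q^{2b}$ as direct sums. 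To descend, suppose $\sum_{\mu_0\in\Gamma}\sigma_{\mu_0}P_{\mu_0}=0$ in $D_q^{a1}$; by the $\Lambda^+$-grading \eqref{eq:Dgr} we may assume $P_{\mu_0}\in D_q^{a2}(\nu)$ for a common $\nu\in\Lambda^+$. Left-multiplying inside $\CS_1^{-1}D_q^{a1}$ by $c_\nu^{-1}$ and using $c_\nu^{-1}\sigma_\mu=q^{(\mu,\nu)}\sigma_\mu c_\nu^{-1}$ (a consequence of \eqref{eq:rel6}), we obtain $\sum_{\mu_0}q^{(\mu_0,\nu)}\sigma_{\mu_0}(c_\nu^{-1}P_{\mu_0})=0$ with each $c_\nu^{-1}P_{\mu_0}\in(\CS_1^{-1}D_q^{a2})(0)=\BD_q^{a2}$; the $\BD_q$-level directness then forces $c_\nu^{-1}P_{\mu_0}=0$ and hence $P_{\mu_0}=0$. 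The same technique covers (ii) via $c_\nu^{-1}\deru_{k_\lambda}=q^{(\lambda,\nu)}\deru_{k_\lambda}c_\nu^{-1}$, and a symmetric argument handles the right-module versions of both statements. The main obstacle is establishing $\BE_q^1=\bigoplus_{\lambda_0\in\Gamma}\Bt_{\lambda_0}\BE_q^2$, which requires extending the filtration argument from the proof of Proposition \ref{prop:B}(ii) to accommodate the $\Bt_\lambda$ generators.
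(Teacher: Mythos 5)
Your surjectivity arguments, and your treatment of (i) in the case $a=2$, are fine and essentially coincide with the paper's: one embeds $D_q^{2b}\hookrightarrow\CS_1^{-1}D_q^{2b}$, reduces via the grading (multiplication by the invertible $\ell_{c_\nu}$) to the degree-zero statement, and reads off $\BD_q^{21}=\bigoplus_{\overline{\lambda}}\BD_q^{22}(1\otimes\Bs_\lambda)$ from \eqref{eq:genB2} and \eqref{eq:genB4}. The gap is in (ii), and hence also in your route to (i) for $a=1$, which rests on it. You assert that $\BE_q^1=\bigoplus_{\overline{\lambda}\in\Lambda/2\Lambda}\Bt_\lambda\BE_q^2$ follows from ``a mild extension of the PBW argument'' of Proposition \ref{prop:B}. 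This is exactly the hard content of the theorem, and it is not a formal PBW statement: the multiplication map $\BB_q\otimes\BF[\Bt_\Lambda]\to\End_\BF(U_q(\Gn^+)^\bigstar)$ is \emph{not} injective, since $\Bt_{2\lambda}\in\BB_q$ for $\lambda\in\Lambda^+$ (Theorem \ref{thm:2}(i)); so any argument must explain why no such collapse occurs for the odd cosets $\lambda\notin2\Lambda$, i.e.\ it must genuinely use parity. Evaluating on graded pieces as in the proof of Proposition \ref{prop:B}(i) gives, for each degree $\delta$, only the single relation $\sum_{\lambda_0}q^{(\delta,\lambda_0)}\,P_{\lambda_0}|_{(U_q(\Gn^+)_\delta)^*}=0$ among $\delta$-dependent operators, so independence of the characters $\delta\mapsto q^{(\delta,\lambda_0)}$ alone does not suffice; likewise the filtration in the proof of Proposition \ref{prop:B}(ii) does not see the $\Bt_\lambda$.

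In the paper this directness is precisely Proposition \ref{prop:key3} ($D_q^{1b}=\bigoplus_{\overline{\lambda}}D_q^{2b}\deru_{k_\lambda}$), and it is proved by representation theory rather than by a PBW argument: one uses $U_q(\Gn^+)k_\lambda\cong M^*(\lambda/2)\otimes\BF_{-\lambda/2}$ (Proposition \ref{prop:key1}), the fact that for $\lambda\notin2\Lambda$ the module $M^*(\lambda/2)$ has non-integral central character so that $D_q^{2b}\deru_{k_\lambda}$ contains no nonzero finite-dimensional $\ad(U_q(\Gb^+))$-stable subspace (Proposition \ref{prop:key2}), and local finiteness of the adjoint action via $\CT^{-1}U_q^f(\Gg)=U_q^e(\Gg)$. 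Note moreover that the $\BE$-level decomposition you want is Theorem \ref{thm:2}(iv), which the paper deduces \emph{from} Theorem \ref{thm:1}(ii); invoking it would be circular. Unless you supply an independent proof of the directness for the odd cosets (which would in effect be a new proof of Proposition \ref{prop:key3}), your proposal establishes surjectivity and part (i) for $a=2$, but not part (ii) nor part (i) for $a=1$.
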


We note that the bijectivity of the first map in \eqref{eq:thm1a} 
is equivalent 
to that of the second map.
In fact the bijectivity of the first 
(resp.\ the second) map in \eqref{eq:thm1a} 
is equivalent to 
\[
D_q^{a1}=
\bigoplus_{\overline{\lambda}\in\Lambda/2\Lambda}
\sigma_\lambda D_q^{a2}
\qquad
\left(\text{resp.}\quad
D_q^{a1}=
\bigoplus_{\overline{\lambda}\in\Lambda/2\Lambda}
D_q^{a2} \sigma_\lambda
\right).
\]
However, we can easily show 
$\sigma_\lambda D_q^{a2}
=D_q^{a2} \sigma_\lambda$
by \eqref{eq:rel3}, \eqref{eq:rel5}, \eqref{eq:rel6}.

Similarly, the bijectivity of the first map in \eqref{eq:thm1b} is equivalent 
to that of the second map by 
\eqref{eq:UUe}.
\subsection{}
In order to verify Theorem \ref{thm:1}
we need some preliminary from the representation theory.

Assume $N(\Lambda,\Lambda)\subset 2\BZ$.
For $\lambda\in\frac12\Lambda$ 
we denote by 
$\BF_\lambda$ 
the one-dimensional 
$U_q(\Gb^+)$-module
corresponding to the character 
$
U_q(\Gb^+)\to\BF
$ 
defined by 
\[
z\mapsto \varepsilon(z) \quad(z\in U_q(\Gn^+)),
\qquad
k_\mu\mapsto q^{(\lambda,\mu)}
\quad(\mu\in\Lambda).
\]
We define a left $U_q(\Gg)$-module 
$M(\lambda)$ by
\[
M(\lambda)=U_q(\Gg)\otimes_{U_q(\Gb^+)}
\BF_\lambda.
\]
It is called the Verma module with  highest weight $\lambda$.
We have the weight space decomposition
\[
M(\lambda)=
\bigoplus_{\mu\in\lambda- Q^+}
M(\lambda)_\mu.
\]
We set 
\[
M^*(\lambda)=\bigoplus_{\mu\in-\lambda- Q^+}
(M(-\lambda)_\mu)^*
\subset
M(-\lambda)^*.
\]
It is a left $U_q(\Gg)$-module by 
\[
\langle um^*,m\rangle
=
\langle m^*,(Su)m\rangle
\qquad(m^*\in M^*(\lambda), u\in U_q(\Gg), 
m\in M(-\lambda)).
\]
We have also the weight space decomposition
\[
M^*(\lambda)=
\bigoplus_{\mu\in\lambda+ Q^+}
M^*(\lambda)_\mu.
\]

\begin{proposition}
\label{prop:key1}
For $\lambda\in\Lambda$ the subspace 
$U_q(\Gn^+)k_\lambda$ of $U_q(\Gg)$ is $\ad(U_q(\Gb^+))$-invariant.
Moreover, in the case $N(\Lambda,\Lambda)\subset 2\BZ$ we have an isomorphism
\[
U_q(\Gn^+)k_\lambda
\cong
M^*(\lambda/2)\otimes\BF_{-\lambda/2}
\]
of $U_q(\Gb^+)$-modules.
\end{proposition}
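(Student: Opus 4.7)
The plan is to verify the two claims in sequence. For the $\ad(U_q(\Gb^+))$-invariance, since $\ad$ is an algebra homomorphism, it suffices to check invariance on the algebra generators $k_\mu$ ($\mu\in\Lambda$) and $e_i$ ($i\in I$) of $U_q(\Gb^+)$. The case of $k_\mu$ gives $\ad(k_\mu)(xk_\lambda)=q^{(\mu,\gamma)}xk_\lambda$ for $x\in U_q(\Gn^+)_\gamma$, and for $e_i$, using $\Delta(e_i)=e_i\otimes 1+k_i\otimes e_i$ together with $k_i x k_i^{-1}=q^{(\alpha_i,\gamma)}x$ and $k_\lambda e_i=q^{(\alpha_i,\lambda)}e_i k_\lambda$, one computes
\[
\ad(e_i)(xk_\lambda)=\bigl(e_i x-q^{(\alpha_i,\gamma+\lambda)}xe_i\bigr)k_\lambda\in U_q(\Gn^+)k_\lambda.
\]

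For the isomorphism, assume $N(\Lambda,\Lambda)\subset 2\BZ$ so that $\chi_{\lambda/2}$ and $\BF_{\pm\lambda/2}$ are well-defined over $\BF$. I would construct an explicit map $\Psi\colon U_q(\Gn^+)k_\lambda\to M^*(\lambda/2)\otimes\BF_{-\lambda/2}$ using the Drinfeld--Rosso pairing $\tau$. Denote by $v_{-\lambda/2}\in M(-\lambda/2)$ the highest weight vector and by $1_{-\lambda/2}$ a generator of $\BF_{-\lambda/2}$. For $x\in U_q(\Gn^+)_\gamma$, set $\phi_x\in M^*(\lambda/2)_{\lambda/2+\gamma}$ by $\phi_x(yv_{-\lambda/2})=\tau(x,y)$ for $y\in U_q(\Gn^-)_{-\gamma}$ and extend by zero on all other weight spaces; then put $\Psi(xk_\lambda)=\phi_x\otimes 1_{-\lambda/2}$. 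The source (under $\ad$) and the target both have weight-$\gamma$ components of dimension $\dim U_q(\Gn^+)_\gamma$ for $\gamma\in Q^+$, and the non-degeneracy of $\tau$ on $U_q(\Gn^+)_\gamma\times U_q(\Gn^-)_{-\gamma}$ makes $\Psi$ bijective on each weight piece, hence bijective overall.

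It remains to verify that $\Psi$ is $U_q(\Gb^+)$-equivariant. Equivariance under $k_\mu$ is immediate from weight matching, since $(\lambda/2+\gamma)+(-\lambda/2)=\gamma$. The main obstacle is the $e_i$-equivariance: evaluating both sides on $yv_{-\lambda/2}$ for $y\in U_q(\Gn^-)_{-\gamma-\alpha_i}$, using $e_i v_{-\lambda/2}=0$ to get $e_i y v_{-\lambda/2}=[e_i,y]v_{-\lambda/2}$, and invoking Lusztig's formula $[e_i,y]=(q_i-q_i^{-1})^{-1}\bigl(k_i r_i(y)-{}_ir(y)k_i^{-1}\bigr)$ for the skew derivations $r_i,{}_ir\colon U_q(\Gn^-)\to U_q(\Gn^-)$, the equivariance reduces to the pair of identities
\[
\tau(e_i x,y)=\frac{-q^{(\alpha_i,\gamma)}}{q_i-q_i^{-1}}\,\tau(x,r_i(y)),\qquad
\tau(xe_i,y)=\frac{-1}{q_i-q_i^{-1}}\,\tau(x,{}_ir(y)),
\]
which are consequences of the product rule $\tau(x_1 x_2,y)=\sum\tau(x_2,y_{(0)})\tau(x_1,y_{(1)})$ together with the definitions of $r_i,{}_ir$ via the comultiplication on $U_q(\Gb^-)$. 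The delicate step throughout is the bookkeeping of the $q$-factors arising from weight shifts under $k_i^{\pm1}$ and from the coefficient $q^{(\alpha_i,\gamma+\lambda)}$ produced in the first paragraph; the cancellations among these factors are precisely what makes the two sides agree.
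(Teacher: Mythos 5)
Your argument is correct in substance, and it is worth noting how it sits relative to the paper: the paper gives no proof of this proposition at all, only the citations to \cite[5.3.10, 7.1.1]{JoB} and to the ``more direct computational proof'' in \cite[Lemma 3.2]{T2}; what you have written is essentially a reconstruction of that direct computational proof. Your first paragraph is exactly right: $\ad(k_\mu)(xk_\lambda)=q^{(\mu,\gamma)}xk_\lambda$ and $\ad(e_i)(xk_\lambda)=\bigl(e_ix-q^{(\alpha_i,\gamma+\lambda)}xe_i\bigr)k_\lambda$ follow immediately from $\Delta(e_i)=e_i\otimes1+k_i\otimes e_i$ and $S(e_i)=-k_i^{-1}e_i$, and checking on the generators $k_\mu$, $e_i$ suffices. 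Your map $\Psi(xk_\lambda)=\phi_x\otimes 1_{-\lambda/2}$ with $\phi_x(yv_{-\lambda/2})=\tau(x,y)$ is well defined because the Verma module is free over $U_q(\Gn^-)$, it matches weights (the twist by $\BF_{-\lambda/2}$ exactly shifts $\lambda/2+\gamma$ to $\gamma$), and it is bijective on each weight space by the non-degeneracy of $\tau$ on $U_q(\Gn^+)_\gamma\times U_q(\Gn^-)_{-\gamma}$ recorded in the paper. The $e_i$-equivariance does reduce, as you say, to pairing identities obtained from $\tau(x_1x_2,y)=(\tau\otimes\tau)(x_2\otimes x_1,\Delta(y))$ together with $e_iyv_{-\lambda/2}=[e_i,y]v_{-\lambda/2}$; a rank-one check (e.g.\ $x=1$ and $x=e_i$ in the $\Gsl_2$ case) confirms that with the paper's normalization $\tau(e_i,f_j)=\delta_{ij}/(q_i^{-1}-q_i)$ the map $\Psi$ is equivariant on the nose, with no weight-dependent rescaling needed. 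The only caveat is that the precise $q$-powers in your two displayed identities depend on your (unstated) normalization of $r_i$ and ${}_ir$, so as written they should be taken as placeholders for the identities produced by the product rule in whatever convention one fixes; this is harmless, since they enter only as intermediate bookkeeping. Compared with the paper's citation route via \cite{JoB}, your proof buys self-containedness at the cost of the $q$-power bookkeeping you flag, and it is the same strategy as the reference \cite[Lemma 3.2]{T2} the author points to for a computational argument.
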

This follows from \cite[5.3.10, 7.1.1]{JoB}.
A more direct computational proof is given in \cite[Lemma 3.2]{T2}

For $a, b=1, 2$ we
define the adjoint action of $U_q(\Gg)$ on $\CS_1^{-1}D_q^{ab}$
by 
\[
\ad(u)(P)=\sum_{(u)}\deru_{u_{(0)}}P\deru_{Su_{(1)}}
\qquad(u\in U_q(\Gg), P\in \CS_1^{-1}D_q^{ab}).
\]
It respects the grading \eqref{eq:SDgr}.
For $u\in U_q(\Gg)$ have
\begin{subequations}
\label{eq:adD}
\begin{align}
\ad(u)(\ell_\varphi)=&\ell_{\deru_u(\varphi)}
&(\varphi\in \CS_1^{-1}A_q),
\\
\ad(u)(\deru_{u'})=&\deru_{\ad(u)(u')}
&(u'\in U_q(\Gg)),
\\
\ad(u)(\sigma_\lambda)=&\varepsilon(u)\sigma_\lambda
&(\lambda\in\Lambda).
\end{align}
\end{subequations}

We see easily that 
\[
D_q^{1b}=
\sum_{\overline{\lambda}\in\Lambda/2\Lambda}
D_q^{2b}\deru_{k_\lambda},
\qquad
\CS_1^{-1}D_q^{1b}=
\sum_{\overline{\lambda}\in\Lambda/2\Lambda}
(\CS_1^{-1}D_q^{2b})\deru_{k_\lambda},
\]
Moreover, we have
\[
\ad(U_q(\Gg))(D_q^{2b}\deru_{k_\lambda})\subset
D_q^{2b}\deru_{k_\lambda},
\qquad
\ad(U_q(\Gg))((\CS_1^{-1}D_q^{2b})\deru_{k_\lambda})\subset
(\CS_1^{-1}D_q^{2b})\deru_{k_\lambda}
\]
for any $\lambda\in\Lambda$
by \eqref{eq:adD} and \eqref{eq:adUe}.

\begin{proposition}
\label{prop:key2}
Assume $\lambda\in\Lambda\setminus 2\Lambda$.
For $b=1, 2$ 
$D_q^{2b}\deru_{k_\lambda}$ contains no non-zero $\ad(U_q(\Gb^+))$-invariant finite-dimensional subspace.
\end{proposition}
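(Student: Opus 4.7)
The plan has three phases: a weight-decomposition reduction, a localization that trivializes the $\Lambda^+$-grading, and a concrete analysis via the quantum Weyl algebra description.

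\emph{Phase 1.} A non-zero finite-dimensional $\ad(U_q(\Gb^+))$-invariant subspace $V\subset D_q^{2b}\deru_{k_\lambda}$ decomposes into finitely many $\ad(U_q(\Gh))$-weight spaces. Picking a weight $\mu_0$ maximal under the $Q^+$-order and any non-zero $v\in V_{\mu_0}$, we have $\ad(e_i)v\in V_{\mu_0+\alpha_i}=0$ for every $i\in I$. Thus it suffices to show that $D_q^{2b}\deru_{k_\lambda}$ contains no non-zero $\ad(U_q(\Gn^+))$-invariant element.

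\emph{Phase 2.} The grading \eqref{eq:Dgr} is preserved by the adjoint action, so we may assume such an invariant $P$ is homogeneous of degree $\nu_0\in\Lambda^+$. Pass to $\CS_1^{-1}D_q^{2b}\deru_{k_\lambda}$, where $\ell_{c_{\nu_0}}$ is invertible, and factor $P=\ell_{c_{\nu_0}}Q$ with $Q\in(\CS_1^{-1}D_q^{2b})(0)\deru_{k_\lambda}$. Since $e_i\cdot c_{\nu_0}=0$ and $\Delta(e_i)=e_i\otimes 1+k_i\otimes e_i$, one computes $\ad(e_i)(P)=q^{(\alpha_i,\nu_0)}\ell_{c_{\nu_0}}\ad(e_i)(Q)$, so that $Q$ is also $\ad(U_q(\Gn^+))$-invariant. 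The problem is thereby reduced to showing that $(\CS_1^{-1}D_q^{2b})(0)\deru_{k_\lambda}$ has no non-zero $\ad(U_q(\Gn^+))$-invariant.

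\emph{Phase 3.} In the quantum Weyl picture \eqref{subeq:genB}, $\deru_{k_\lambda}$ corresponds to $\Bt_{-\lambda}\otimes\Bs_\lambda$, which lies outside $\BD_q^{22}$ precisely because $\lambda\notin 2\Lambda$. The sub-module $\deru_{U_q(\Gn^+)k_\lambda}\subset D_q^{2b}\deru_{k_\lambda}$ is $\ad(U_q(\Gb^+))$-invariant and, by Proposition \ref{prop:key1} (after extending scalars so that $N(\Lambda,\Lambda)\subset 2\BZ$, which preserves the question of existence of finite-dimensional invariants), is isomorphic to $M^*(\lambda/2)\otimes\BF_{-\lambda/2}$. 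This Verma-dual has no non-zero $\ad(U_q(\Gn^+))$-invariant: such an invariant $\phi\in M^*(\lambda/2)$ must vanish on $U_q(\Gn^+)_+M(-\lambda/2)$, and for $\lambda\in\Lambda\setminus 2\Lambda$ one may choose $i$ with $(\lambda,\alpha_i)\neq 0$, making $e_if_iv_{-\lambda/2}=((q^{-(\lambda/2,\alpha_i)}-q^{(\lambda/2,\alpha_i)})/(q_i-q_i^{-1}))v_{-\lambda/2}$ a non-zero multiple of $v_{-\lambda/2}$; hence $v_{-\lambda/2}\in U_q(\Gn^+)_+M(-\lambda/2)$, and an induction on the $U_q(\Gn^-)$-degree using $[f^I,e_j]\in U_q(\Gb^-)$ propagates this to give $M(-\lambda/2)=U_q(\Gn^+)_+M(-\lambda/2)$. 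Finally, one extends this vanishing from $\deru_{U_q(\Gn^+)k_\lambda}$ to the full space using the PBW-type decomposition of $\BB_q$ (Proposition \ref{prop:B}(i)) and its integrality (\ref{prop:B}(ii)), extracting a leading-term contribution that must lie in the Verma-dual sub-module.

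\emph{Main obstacle.} The rep-theoretic core --- the vanishing of $\Gn^+$-invariants in the Verma-dual at the half-integral weight $\lambda/2$ --- is clean, but propagating this absence from $\deru_{U_q(\Gn^+)k_\lambda}$ to the entire algebra $(\CS_1^{-1}D_q^{2b})(0)\deru_{k_\lambda}$ is the central challenge. The natural route is to construct an $\ad(U_q(\Gn^+))$-equivariant leading-term projection targeting the Verma-dual, whose kernel carries no invariants by an induction on the De~Concini--Kac--Procesi filtration appearing in the proof of Proposition \ref{prop:B}. Keeping the combinatorial bookkeeping consistent with the twist induced by $\Bt_{-\lambda}$ --- which is what distinguishes the case $\lambda\notin 2\Lambda$ from the case $\lambda\in 2\Lambda$ --- is where the principal subtlety lies.
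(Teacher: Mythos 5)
Your Phases 1 and 2 are fine and essentially coincide with the paper's own reductions (homogeneity of the invariant with respect to \eqref{eq:Dgr}, then invertibility of $\ell_{c_{\nu_0}}$ in $\CS_1^{-1}D_q^{2b}$ together with $e_i\cdot c_{\nu_0}=0$). The trouble is Phase 3, and it is twofold. First, the step you yourself label the ``main obstacle'' --- passing from the submodule $\deru_{U_q(\Gn^+)k_\lambda}$ to all of $(\CS_1^{-1}D_q^{2b})(0)\deru_{k_\lambda}$ --- is not a technicality but the heart of the proof, and a ``leading-term projection along the De Concini--Kac--Procesi filtration'' is neither supplied nor easy to supply: the degree-zero localized algebra also contains the elements $\deru_{k_{2\mu}}$ ($\mu\in\Lambda^+$), i.e.\ the $\Bt$-part of $\BE_q^2$, which your picture via $\BB_q$ alone does not account for. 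The paper disposes of this by first using Proposition \ref{prop:Rel2} to write $(\CS_1^{-1}D_q^{2b})(0)=\bigcup_{\mu\in\Lambda^+}E\,\deru_{k_{-2\mu}}$ with $E=\langle\ell_\varphi,\deru_x,\sigma_\mu\rangle$, so that after replacing $\lambda$ by $\lambda-2\mu$ (harmless modulo $2\Lambda$) the invariant lies in $E\deru_{k_\lambda}\cong U_q(\Gn^+)^\bigstar\otimes U_q(\Gn^+)k_\lambda\otimes\BF[\sigma_{b\Lambda}]$ as $U_q(\Gb^+)$-modules, and then filtering $U_q(\Gn^+)^\bigstar$ by $U_q(\Gb^+)$-stable subspaces with one-dimensional character quotients to reduce to $U_q(\Gn^+)k_\lambda$ alone. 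Without an argument playing the role of these two steps, your proof never actually reaches the Verma-dual.

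Second, the representation-theoretic core is also gappy. You want $M(-\lambda/2)=\sum_i e_iM(-\lambda/2)$ starting from $e_if_iv=cv$, $c\neq0$, by ``induction on the $U_q(\Gn^-)$-degree''; as written this does not close, since $[e_i,f^I]$ has $f$-degree $|I|-1$ but you then multiply by $f_i$, so the error terms have the same degree as the monomial being treated. Worse, the scheme cannot be repaired using only the input ``$e_if_iv\neq0$ for one $i$'': for a dominant integral highest weight $\mu\neq0$ one also has $e_if_iv\neq0$ for some $i$, yet the $\Gn^+$-coinvariants of $M(\mu)$ are non-zero (they surject onto those of the finite-dimensional quotient), so non-integrality of $\lambda/2$ must enter globally, not through a single simple root. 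The paper's argument is designed exactly for this: a weight vector killed by all $\ad(e_i)$ spans a one-dimensional $U_q(\Gb^+)$-stable subspace of $M^*(\lambda/2)$ (up to a character twist), local finiteness of $U_q(\Gb^-)$ on $M^*(\lambda/2)$ then generates a non-zero finite-dimensional $U_q(\Gg)$-submodule, and the central character of $M^*(\lambda/2)$ excludes this precisely because $\lambda/2\notin\Lambda$. You should either adopt these two mechanisms (Proposition \ref{prop:Rel2} plus the character filtration of $U_q(\Gn^+)^\bigstar$, and the local-finiteness/central-character argument) or give complete proofs of your substitutes, which at present are plans rather than proofs.
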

\begin{proof}
We may assume $N(\Lambda,\Lambda)\subset 2\BZ$ from the beginning.

Assume that 
$D_q^{2b}\deru_{k_\lambda}$ contains a non-zero $\ad(U_q(\Gb^+))$-invariant finite-dimensional subspace $V$.
Note that 
$D_q^{2b}\deru_{k_\lambda}$ is a direct sum of the 
$\ad(U_q(\Gb^+))$-invariant subspaces 
$D_q^{2b}(\nu)\deru_{k_\lambda}$
for $\nu\in\Lambda^+$.
Hence we may assume from the beginning that 
$V\subset D_q^{2b}(\nu)\deru_{k_\lambda}$
for some  $\nu\in\Lambda^+$.
Then $c_\nu^{-1}V$ is 
a non-zero $\ad(U_q(\Gb^+))$-invariant finite-dimensional subspace of $(\CS_1^{-1}D_q^{2b})(0){\deru}_{k_\lambda}$.
Set
\[
E=\langle
{\ell}_\varphi, {\deru}_x, 
 {\sigma}_{\mu}
\mid
\varphi\in(\CS_1^{-1}A_q)(0), x\in U_q(\Gn^+), 
\mu\in b\Lambda
\rangle
\subset (\CS_1^{-1}D_q^{2b})(0).
\]
By Proposition \ref{prop:Rel2} we have
\[
(\CS_1^{-1}D_q^{2b})(0)
=\bigcup_{\mu\in\Lambda^+}E{\deru}_{k_{-2\mu}},
\]
and hence 
by replacing $\lambda$ with $\lambda-2\mu$ for some $\mu\in\Lambda^+$
we may assume that
$c_\nu^{-1}V\subset E{\deru}_{k_\lambda}$
(note that $D_q^{2b}\deru_{k_\lambda}$ depends only on $\lambda+2\Lambda$).
By \eqref{subeq:genB} and Proposition \ref{prop:B} we have an isomorphism
\[
U_q(\Gn^+)^\bigstar\otimes U_q(\Gn^+)k_\lambda\otimes 
\BF[{\sigma}_{b\Lambda}]
\xrightarrow{\sim} E{\deru}_{k_\lambda}
\]
of $U_q(\Gb^+)$-modules.
Here, the action of $U_q(\Gb^+)$ on 
$U_q(\Gn^+)^\bigstar$
(resp.\
$U_q(\Gn^+)k_\lambda$, 
$\BF[{\sigma}_{b\Lambda}]$) is 
given by the identification \eqref{eq:iota}
(resp.\ the adjoint action, resp.\ trivial action).
Hence $U_q(\Gn^+)^\bigstar\otimes U_q(\Gn^+)k_\lambda$ 
contains a non-zero $U_q(\Gb^+)$-invariant finite-dimensional subspace.
Since $U_q(\Gn^+)^\bigstar$ admits a filtration 
\[
0=F_{-1}\subset F_0\subset F_1\subset\cdots\subset
U_q(\Gn^+)^\bigstar
\]
by $U_q(\Gb^+)$-invariant subspaces such that
\[
U_q(\Gn^+)^\bigstar=\bigcup_r F_r, \qquad
F_r/F_{r-1}\cong\BF_{\mu_r}
\]
for $\mu_r\in Q$, 
there exists $r$ such that 
$(F_r\otimes U_q(\Gn^+)k_\lambda)/(
F_{r-1}\otimes U_q(\Gn^+)k_\lambda)$
contains a non-zero $U_q(\Gb^+)$-invariant finite-dimensional subspace.
Therefore, $U_q(\Gn^+)k_\lambda$
contains a non-zero $U_q(\Gb^+)$-invariant finite-dimensional subspace by 
\[
(F_r\otimes U_q(\Gn^+)k_\lambda)/(
F_{r-1}\otimes U_q(\Gn^+)k_\lambda)
\cong \BF_{\mu_r}\otimes
U_q(\Gn^+)k_\lambda.
\]
Hence by Proposition \ref{prop:key1}  $M^*(\lambda/2)$ contains a non-zero $U_q(\Gb^+)$-invariant finite-dimensional subspace.
We see easily that $\dim U_q(\Gb^-)m<\infty$ for any $m\in M^*(\lambda/2)$, and hence 
$M^*(\lambda/2)$ contains a non-zero $U_q(\Gg)$-invariant finite-dimensional subspace.
This contradicts with the assumption $\lambda/2
\notin \Lambda$.
In fact, the central character of $M^*(\lambda/2)$ 
does not coincide with that of any finite-dimensional irreducible $U_q(\Gg)$-module.
\end{proof}
The proof of the following result was inspired by that of \cite[Corollary 3.3]{Jo1}.
\begin{proposition}
\label{prop:key3}
We have
\[
D_q^{1b}=
\bigoplus_{\overline{\lambda}\in\Lambda/2\Lambda}
D_q^{2b}\deru_{k_\lambda}.
\]
\end{proposition}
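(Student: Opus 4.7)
The plan is to argue by contradiction via a minimal counter-example and then transport a finite-dimensional $\ad(U_q(\Gb^+))$-invariant subspace across $\Lambda/2\Lambda$-cosets to contradict Proposition \ref{prop:key2}. Suppose $\sum_{\bar\lambda \in T}P_{\bar\lambda}\deru_{k_\lambda} = 0$ is a nontrivial relation with $P_{\bar\lambda}\in D_q^{2b}\setminus\{0\}$ and $|T|$ minimal. Since each $\deru_{k_\lambda}$ is invertible in $\End_\BF(A_q)$, the case $|T|=1$ is impossible, so $|T|\ge 2$. Right-multiplying by $\deru_{k_{-\lambda_0}}$ for a fixed $\bar\lambda_0 \in T$ shifts the relation so that $\bar 0 \in T$ and $P_{\bar 0}\in D_q^{2b}\setminus\{0\}$.

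\smallskip

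Next I would exploit the adjoint action. By Proposition \ref{prop:key1} we have $\ad(u_{(1)})(k_\lambda) \in U_q(\Gn^+)k_\lambda$ for $u \in U_q(\Gg)$, and combined with \eqref{eq:adUe} one checks that $\ad(U_q(\Gg))$ preserves each subspace $D_q^{2b}\deru_{k_\lambda}$. Applying $\ad(u)$ to the relation produces another relation supported on $T$; by minimality, for each $u\in U_q(\Gg)$ either every term $\ad(u)(P_{\bar\lambda}\deru_{k_\lambda})$ vanishes or none does. Consequently the annihilators of all the $P_{\bar\lambda}\deru_{k_\lambda}$ in $U_q(\Gg)$ under the ad-action coincide, and the cyclic ad-submodules $N_{\bar\lambda} := \ad(U_q(\Gg))(P_{\bar\lambda}\deru_{k_\lambda}) \subset D_q^{2b}\deru_{k_\lambda}$ are mutually $\ad$-equivariantly isomorphic via $\ad(u)(P_{\bar 0})\mapsto\ad(u)(P_{\bar\lambda}\deru_{k_\lambda})$.

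\smallskip

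To finish, I would exhibit a nonzero finite-dimensional $\ad(U_q(\Gb^+))$-invariant subspace $W \subset N_{\bar 0}\subset D_q^{2b}$; transported via the above equivariant isomorphism to $N_{\bar\lambda_1}\subset D_q^{2b}\deru_{k_{\lambda_1}}$ for any $\bar\lambda_1 \in T\setminus\{\bar 0\}$, it would directly contradict Proposition \ref{prop:key2}. To produce $W$, I would write $P_{\bar 0}$ as a linear combination of monomials $\ell_\varphi\deru_{u'}\deru_{k_{2\lambda}}\sigma_{b\mu}$ with $u'\in U_q^f(\Gg)$ (using $U_q^e(\Gg) = \CT^{-1}U_q^f(\Gg)$ from \eqref{eq:Ufe}), and combine the ad-local-finiteness of $\ell_\varphi$ (from integrability of $A_q(\nu)$) and of $\deru_{u'}$ with the structure of $\{\deru_{xk_{2\lambda}}:x \in U_q(\Gn^+)\}$ as $M^*(\lambda)\otimes\BF_{-\lambda}$ given by Proposition \ref{prop:key1}, the latter possessing a nonzero highest-weight vector spanning a $1$-dimensional $\ad(U_q(\Gn^+))$-invariant subspace.

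\smallskip

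The hard part will be precisely this construction of $W$. The adjoint action on products of generators propagates via the coproduct and can mix ad-locally-finite and ad-locally-infinite contributions, so extracting a genuine finite-dimensional invariant subspace inside the ad-cyclic module generated by an arbitrary $P_{\bar 0}\in D_q^{2b}$ requires careful bookkeeping. A preliminary reduction to the case where $P_{\bar 0}$ is homogeneous for both the $\ad(U_q(\Gh))$-weight grading and the grade \eqref{eq:Dgr} on $D_q^{2b}$ should help, and the leading-term/filtration strategy used in the proof of Proposition \ref{prop:B} may need to be adapted here to track the ad-structure across factors.
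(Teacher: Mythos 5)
Your structural scaffolding is fine: the reduction to a minimal relation with $\overline{0}\in T$, the all-or-none behaviour under $\ad(U_q(\Gg))$, the resulting equality of annihilators, and the equivariant transport $N_{\overline{0}}\cong N_{\overline{\lambda}}$ are all correct, and a contradiction with Proposition \ref{prop:key2} would indeed follow \emph{if} you could produce the subspace $W$. But the construction of $W$, which you yourself flag as the hard part, rests on a false claim. After clearing denominators with $U_q^e(\Gg)=\CT^{-1}U_q^f(\Gg)$ the leftover factors are $\deru_{k_{2\lambda}}$ with $\lambda\in\Lambda^+$, and you assert that $U_q(\Gn^+)k_{2\lambda}\cong M^*(\lambda)\otimes\BF_{-\lambda}$ contains a nonzero vector spanning a one-dimensional $\ad(U_q(\Gn^+))$-invariant subspace. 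For $\lambda\in\Lambda^+\setminus\{0\}$ it does not: a vector of $M^*(\lambda)$ killed by all $e_i$ is a linear form on $M(-\lambda)$ vanishing on $\sum_i e_iM(-\lambda)$, and since $-\lambda$ is antidominant this sum is all of $M(-\lambda)$ (already for $\Gsl_2$, $ef^nv$ is a nonzero multiple of $f^{n-1}v$ for every $n\geqq1$); equivalently, by the weight/central-character argument as in the proof of Proposition \ref{prop:key2}, $M^*(\lambda)$ with $\lambda$ dominant nonzero contains no nonzero finite-dimensional $U_q(\Gb^+)$-invariant subspace at all. The finite-dimensional $\ad(U_q(\Gb^+))$-invariant subspaces occur in $U_q(\Gn^+)k_{-2\mu}$, $\mu\in\Lambda^+$, i.e.\ in the opposite direction. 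So your argument leaves its essential existence statement (that $N_{\overline{0}}=\ad(U_q(\Gg))(P_{\overline{0}})$ contains such a $W$ for an \emph{arbitrary} nonzero $P_{\overline{0}}\in D_q^{2b}$) unproven, and the specific mechanism offered for it is incorrect.

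The paper's proof shows how to avoid this problem. After the same reduction, given a nonzero $P\in D_q^{2b}\cap\bigoplus_{\overline{\lambda}\neq\overline{0}}D_q^{2b}\deru_{k_\lambda}$, one multiplies on the right by $\deru_{k_{-2\mu}}$ with $\mu\in\Lambda^+$ sufficiently large: by \eqref{eq:Ufe} each monomial $\ell_\varphi\deru_u\sigma_\nu$ of $P$ turns into a product of $\ad$-locally finite elements ($uk_{-2\mu}\in U_q^f(\Gg)$, $\ell_\varphi$ is locally finite since $A_q(\nu)$ is finite dimensional, $\sigma_\nu$ is $\ad$-invariant), so the whole cyclic module $\ad(U_q(\Gg))(P\deru_{k_{-2\mu}})$ is finite dimensional; since $2\mu\in2\Lambda$, the nonzero element $P\deru_{k_{-2\mu}}$ still lies in $\bigoplus_{\overline{\lambda}\neq\overline{0}}D_q^{2b}\deru_{k_\lambda}$, and projecting this finite-dimensional invariant subspace onto one $\ad$-stable summand contradicts Proposition \ref{prop:key2} directly. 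With this right-translation trick your minimality/transport apparatus becomes unnecessary; without it, or some substitute proof that the $\ad$-cyclic module of an arbitrary nonzero element of $D_q^{2b}$ contains a nonzero finite-dimensional $\ad(U_q(\Gb^+))$-invariant subspace, your argument does not close.
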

\begin{proof}
We show  that 
for any $\Gamma\subset \Lambda/2\Lambda$  the sum 
$
\sum_{\overline{\lambda}\in\Gamma}
D_q^{2b}\deru_{k_\lambda}
$
is a direct sum by induction on $|\Gamma|$.
The case $|\Gamma|=1$ being obvious we assume that $|\Gamma|\geqq 2$.
Since $\deru_{k_\mu}$ is invertible for $\mu\in\Lambda$, 
the sum $
\sum_{\overline{\lambda}\in\Gamma}
D_q^{2b}\deru_{k_\lambda}
$ is a direct sum if and only if 
the sum
$
\sum_{\overline{\lambda}\in\Gamma}
D_q^{2b}\deru_{k_{\lambda+\mu}}
$
is a direct sum.
Hence we may assume from the beginning that $\overline{0}\in\Gamma$.
Set $\Gamma'=\Gamma\setminus\{\overline{0}\}$.
By the hypothesis of induction 
we have
\[
\sum_{\overline{\lambda}\in\Gamma'}
D_q^{2b}\deru_{k_\lambda}
=
\bigoplus_{\overline{\lambda}\in\Gamma'}
D_q^{2b}\deru_{k_\lambda}.
\]
Hence it is sufficient to show 
\[
D_q^{2b}\cap
\left(\bigoplus_{\overline{\lambda}\in\Gamma'}
D_q^{2b}\deru_{k_\lambda}\right)=\{0\}.
\]
Assume there exists a non-zero  element $P\in D_q^{2b}\cap
\left(\bigoplus_{\overline{\lambda}\in\Gamma'}
D_q^{2b}\deru_{k_\lambda}\right)$.
By \eqref{eq:Ufe} there exists some $\mu\in\Lambda^+$ such that $\dim\ad(U_q(\Gg))(P\deru_{k_{-2\mu}})<\infty$.
This implies that 
$\bigoplus_{\overline{\lambda}\in\Gamma'}
D_q^{2b}\deru_{k_\lambda}$ contains a non-zero $\ad(U_q(\Gb^+))$-invariant finite-dimensional subspace. This contradicts with Proposition \ref{prop:key2}.
\end{proof}

\subsection{}
Let us show Theorem \ref{thm:1}.
The assertion (ii) follows from Proposition \ref{prop:key3}.
Let us show (i).
We first consider the case $a=2$.
By the injectivity of $D_q^{2b}\to\CS_1^{-1}D_q^{2b}$ for $b=1, 2$ 
it is sufficient to show 
\[
\CS_1^{-1}D_q^{21}=\bigoplus_{\overline{\lambda}\in\Lambda/2\Lambda}
(\CS_1^{-1}D_q^{22})\sigma_{\lambda}.
\]
Hence it is suffcient to show 
\[
(\CS_1^{-1}D_q^{21})(0)=\bigoplus_{\overline{\lambda}\in\Lambda/2\Lambda}
((\CS_1^{-1}D_q^{22})(0))\sigma_{\lambda}.
\]
This is equivalent to 
\[
\BD_q^{21}=\bigoplus_{\overline{\lambda}\in\Lambda/2\Lambda}
\BD_q^{22}(1\otimes\Bs_\lambda),
\]
which easily follows from \eqref{eq:genB2}, \eqref{eq:genB4}.
Finally,  we obtain (i) for $a=1$ by
\[
D_q^{11}
=\bigoplus_{\overline{\lambda}\in\Lambda/2\Lambda}
D_q^{21}\deru_{k_\lambda}
=\bigoplus_{\overline{\lambda},\overline{\mu}\in\Lambda/2\Lambda}
D_q^{22}\sigma_\mu\deru_{k_\lambda}
=\bigoplus_{\overline{\lambda},\overline{\mu}\in\Lambda/2\Lambda}
D_q^{22}\deru_{k_\lambda}\sigma_\mu
=\bigoplus_{\overline{\mu}\in\Lambda/2\Lambda}
D_q^{12}\sigma_\mu.
\]
The proof of 
Theorem \ref{thm:1} is now complete.
\section{Local expressions}
In this section we consider $\BD_q$ and its subalgebras.

We set 
\begin{align}
\Bt_{2\Lambda^+}=&
\{\Bt_{\lambda}\mid\lambda\in2\Lambda^+\},
\\
\BF[\Bt_{2\Lambda^+}]
=&\bigoplus_{\lambda\in 2\Lambda^+}\BF\Bt_\lambda,
\\
\BF[\Bt_{c\Lambda}]
=&\bigoplus_{\lambda\in c\Lambda}\BF\Bt_\lambda
\qquad&(c=1, 2).
\end{align}
\begin{theorem}
\label{thm:2}
\begin{itemize}
\item[(i)]
We have $\Bt_{2\Lambda^+}\subset\BB_q$.
\item[(ii)]
The multiplicative subset 
$\Bt_{2\Lambda^+}$
of $\BB_q$ satisfies the left and right Ore conditions.
\item[(iii)]
We have 
\begin{equation*}
\BE_q^2\cong
(\Bt_{2\Lambda^+})^{-1}\BB_q
\cong 
\BF[\Bt_{2\Lambda}]\otimes_{\BF[\Bt_{2\Lambda^+}]}\BB_q
\cong 
\BB_q
\otimes_{\BF[\Bt_{2\Lambda^+}]}
\BF[\Bt_{2\Lambda}].
\end{equation*}
\item[(iv)]
We have 
\begin{equation*}
\BE_q^1\cong
\BF[\Bt_{\Lambda}]\otimes_{\BF[\Bt_{2\Lambda}]}\BE_q^2
\cong 
\BE_q^2
\otimes_{\BF[\Bt_{2\Lambda}]}
\BF[\Bt_{\Lambda}].
\end{equation*}

\end{itemize}
\end{theorem}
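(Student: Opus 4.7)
The plan is to deduce all four assertions from Proposition 3.4(i), the quasi-commutation relations (3.11f)--(3.11g), and the PBW-type basis of $\BB_q$ from Proposition 4.2, combined with standard Ore-localization arguments. For (i), I would transport the identity
\[
\deru_{k_{2\lambda}}\sigma_{-2\lambda}=\sum_r\ell_{c_\lambda^{-1}(y_r\cdot c_\lambda)}\deru_{x_r}
\]
of Proposition 3.4(i) through the isomorphism $\vartheta$ of (3.14). By the translation rules (3.5), the left-hand side becomes a single scalar-type operator of the form $\Bt_\mu\otimes 1$ for some $\mu\in 2\Lambda$, while each term on the right becomes $\Bm_{\vartheta_0(c_\lambda^{-1}(y_r\cdot c_\lambda))}\Bd_{x_r}\otimes 1$, i.e., an element of $\BB_q\otimes 1$; this directly exhibits the desired $\Bt_{2\lambda}$ as an explicit sum in $\BB_q$. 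For (ii), the quasi-commutation relations (3.11f)--(3.11g) imply that for any homogeneous $\Bm_\xi\Bd_x\in\BB_q$ (with $\xi\in(U_q(\Gn^+)_\gamma)^*$, $x\in U_q(\Gn^+)_\delta$) one has
\[
\Bt_{2\lambda}(\Bm_\xi\Bd_x)=q^{(\gamma-\delta,\,2\lambda)}(\Bm_\xi\Bd_x)\Bt_{2\lambda};
\]
since Proposition 4.2(i) decomposes every $b\in\BB_q$ as a finite sum of such homogeneous terms, $\Bt_{2\lambda}$ normalizes $\BB_q$ strongly in the sense that $\Bt_{2\lambda}b=\tilde b\,\Bt_{2\lambda}$ for some $\tilde b\in\BB_q$, which yields both Ore conditions at once with common denominator $\Bt_{2\lambda}$.

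For (iii), I realise all three objects as subalgebras of $\End_\BF(U_q(\Gn^+)^\bigstar)$. Since $\BB_q$ is a domain (Proposition 4.2(ii)) and each $\Bt_{2\lambda}$ acts invertibly on $U_q(\Gn^+)^\bigstar$ by nonzero scalars on weight spaces, the universal property of Ore localization produces an embedding $(\Bt_{2\Lambda^+})^{-1}\BB_q\hookrightarrow\End_\BF(U_q(\Gn^+)^\bigstar)$ whose image contains $\BB_q$ as well as $\Bt_{\pm 2\lambda}$ for every $\lambda\in\Lambda^+$; writing $2\Lambda=2\Lambda^+-2\Lambda^+$ it therefore contains every $\Bt_\mu$ with $\mu\in 2\Lambda$, and so coincides with $\BE_q^2$. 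For the tensor-product descriptions, the multiplication map $\Bt_\mu\otimes b\mapsto \Bt_\mu b$ from $\BF[\Bt_{2\Lambda}]\otimes_{\BF[\Bt_{2\Lambda^+}]}\BB_q$ to $\BE_q^2$ is well-defined (the balance condition over $\BF[\Bt_{2\Lambda^+}]$ is respected because $\Bt_{2\mu}$ acts identically whether viewed inside $\BF[\Bt_{2\Lambda}]$ or inside $\BB_q$), it is surjective by the normalization property of (ii), and injective by clearing denominators: an alleged nontrivial relation, multiplied by a suitably dominant $\Bt_{2\nu}$, becomes a relation inside $\BB_q\hookrightarrow\BE_q^2$ and hence trivial. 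The right-handed variant is symmetric.

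For (iv), the quasi-commutation relations again show that conjugation by any $\Bt_\lambda$ preserves $\BE_q^2$, so $\BE_q^1=\sum_{\overline{\lambda}\in\Lambda/2\Lambda}\BE_q^2\,\Bt_\lambda$, and the tensor-product identification $\BE_q^1\cong\BF[\Bt_\Lambda]\otimes_{\BF[\Bt_{2\Lambda}]}\BE_q^2$ will follow once this sum is shown to be direct. Directness reduces, via the PBW basis of $\BB_q$ and the $\BF[\Bt_{2\Lambda}]$-module structure of $\BE_q^2$ from (iii), to the $\BE_q^2$-linear independence of $\Bt_{\lambda_1},\dots,\Bt_{\lambda_s}$ for representatives of distinct cosets of $2\Lambda$ in $\Lambda$; this further reduces to the linear independence of the characters $\gamma\mapsto q^{(\gamma,\lambda_j)}$ on $Q$, which holds because $q^{1/N}$ is transcendental over $\BQ$ and the form $(\cdot,\cdot)$ separates distinct cosets of $2\Lambda$ via its pairing with $Q$. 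I expect the main obstacle to be part (iv): making the direct-sum argument completely rigorous requires careful bookkeeping of how the $\BE_q^2$-coefficients of the $\Bt_{\lambda_j}$ interact with the weight-dependent scalings introduced by the $\Bt$'s, whereas parts (i)--(iii) are largely reformulations of already established facts together with routine Ore-localization theory.
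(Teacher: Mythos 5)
Your treatment of (i)--(iii) essentially reproduces the paper's own route: (i) is the identity of Proposition \ref{prop:Rel2} transported through $\vartheta$, (ii) comes from the commutation relations \eqref{subeq:relA} together with the domain property in Proposition \ref{prop:B}, and (iii) is routine Ore-localization bookkeeping; these parts are fine (your hedging about the sign of the exponent in (i) matches an ambiguity already present in the paper's conventions for $\Bt_\lambda$).

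The genuine gap is in (iv), exactly where you anticipated trouble, and it is not a matter of bookkeeping. After clearing denominators, directness of $\sum_{\overline{\lambda}\in\Lambda/2\Lambda}\BE_q^2\Bt_\lambda$ amounts to: if $\sum_j b_j\Bt_{\nu_j}=0$ with $b_j\in\BB_q$ and $\nu_1,\dots,\nu_s$ in pairwise distinct cosets of $2\Lambda$, then all $b_j=0$. Restricted to a weight space this reads
\[
\sum_j q^{(\gamma,\nu_j)}\, b_j|_{(U_q(\Gn^+)_\gamma)^*}=0
\qquad(\gamma\in Q^+),
\]
where the coefficients $b_j|_{(U_q(\Gn^+)_\gamma)^*}$ are operators that vary with $\gamma$ and may well coincide or be proportional for different $j$ at any particular $\gamma$. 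Dedekind--Artin independence of the characters $\gamma\mapsto q^{(\gamma,\nu_j)}$ applies to scalar coefficients that do not depend on $\gamma$, so it does not decouple this family of relations. What must be excluded is precisely that some element of $\BB_q$, or of its localization, acts as $\Bt_\mu$ with $\mu\notin 2\Lambda$; that exclusion is the substance of the statement and does not follow from nondegeneracy of $(\;,\;)$ and transcendence of $q^{1/N}$. In the paper the directness is not proved inside $\BE_q^1$ at all: it is pulled back to $D_q$ and deduced from Theorem \ref{thm:1} (ii) for $b=1$, i.e. from Proposition \ref{prop:key3}, whose proof requires Joseph's description $U_q^e(\Gg)=\CT^{-1}U_q^f(\Gg)$ in \eqref{eq:Ufe} together with the representation-theoretic input of Propositions \ref{prop:key1} and \ref{prop:key2} (the identification of $U_q(\Gn^+)k_\lambda$ with a twist of the dual Verma module $M^*(\lambda/2)$ and a central-character argument showing it has no nonzero finite-dimensional $\ad(U_q(\Gb^+))$-stable subspace when $\lambda\notin2\Lambda$). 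Nothing of comparable strength appears in your sketch; in rank one the needed fact can be checked by hand on the degree-preserving part of $\BB_q$, but in general your argument for (iv) would need either a substitute for Propositions \ref{prop:key1}--\ref{prop:key3} or a genuinely new argument controlling which diagonal operators lie in localizations of $\BB_q$.
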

\begin{proof}
(i) is a consequence of Proposition \ref{prop:Rel2}.
(ii) follows from \eqref{subeq:relA} and Proposition \ref{prop:B} (ii).
Since elements of $\Bt_{2\Lambda^+}$ are invertible in $\BE^2_q$, we have a canonical surjective algebra homomorphism 
$(\Bt_{2\Lambda^+})^{-1}\BB_q
\to\BE_q^2$.
Its injectivity follows from the exactitude of the localization functor.
(iii) is proved.
To prove (iv) it is sufficient to show 
\[
\BE_q^1=
\bigoplus_{\overline{\lambda}\in\Lambda/2\Lambda}
\Bt_\lambda\BE_q^2
=
\bigoplus_{\overline{\lambda}\in\Lambda/2\Lambda}
\BE_q^2\Bt_\lambda,
\]
which is equivalent to
\[
\BD_q^{11}=
\bigoplus_{\overline{\lambda}\in\Lambda/2\Lambda}
(\Bt_\lambda\otimes1)\BD_q^{21}
=
\bigoplus_{\overline{\lambda}\in\Lambda/2\Lambda}
\BD_q^{21}(\Bt_\lambda\otimes1).
\]
This  is again equivalent to 
\[
\BD_q^{11}=
\bigoplus_{\overline{\lambda}\in\Lambda/2\Lambda}
(\Bt_\lambda\otimes\Bs_{-\lambda})\BD_q^{21}
=
\bigoplus_{\overline{\lambda}\in\Lambda/2\Lambda}
\BD_q^{21}(\Bt_\lambda\otimes\Bs_{-\lambda}),
\]
which follows easily from Theorem \ref{thm:1} (ii) for $b=1$.
\end{proof}
\begin{remark}
By Proposition \ref{prop:B} $\Bt_{\lambda}$ 
for $\lambda\in2\Lambda^+$ is invertible in $\BB_q$ only if $\lambda=0$.
\end{remark}
\begin{corollary}
We have
\begin{equation*}
\BD_q\cong
(\BF[\Bt_{\Lambda}]\otimes_{\BF[\Bt_{2\Lambda^+}]}\BB_q)
\otimes\BF[\Bs_\Lambda]
\cong 
(\BB_q
\otimes_{\BF[\Bt_{2\Lambda^+}]}
\BF[\Bt_{\Lambda}])
\otimes\BF[\Bs_\Lambda].
\end{equation*}
\end{corollary}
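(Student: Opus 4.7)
The plan is to derive the corollary directly by chaining together the isomorphisms already established in Theorem~\ref{thm:2} and the decomposition \eqref{eq:genB1}. No new structural input is needed; the only content is the transitivity/associativity of tensor products over the tower of commutative subalgebras $\BF[\Bt_{2\Lambda^+}]\subset\BF[\Bt_{2\Lambda}]\subset\BF[\Bt_\Lambda]$.

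First I would invoke \eqref{eq:genB1}, which gives the direct factorization $\BD_q=\BE_q^1\otimes\BF[\Bs_\Lambda]$. Thus it suffices to produce the asserted description of $\BE_q^1$; the $\BF[\Bs_\Lambda]$-tensor factor is then carried along trivially. Next I would apply Theorem~\ref{thm:2}(iv) to rewrite
\[
\BE_q^1\;\cong\;\BF[\Bt_\Lambda]\otimes_{\BF[\Bt_{2\Lambda}]}\BE_q^2
\;\cong\;\BE_q^2\otimes_{\BF[\Bt_{2\Lambda}]}\BF[\Bt_\Lambda],
\]
and then substitute the description of $\BE_q^2$ from Theorem~\ref{thm:2}(iii), namely $\BE_q^2\cong\BF[\Bt_{2\Lambda}]\otimes_{\BF[\Bt_{2\Lambda^+}]}\BB_q\cong\BB_q\otimes_{\BF[\Bt_{2\Lambda^+}]}\BF[\Bt_{2\Lambda}]$, into these formulas.

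The remaining step is the standard associativity of tensor products over the chain of commutative subalgebras, together with $\BF[\Bt_\Lambda]\otimes_{\BF[\Bt_{2\Lambda}]}\BF[\Bt_{2\Lambda}]\cong\BF[\Bt_\Lambda]$. Concretely,
\[
\BF[\Bt_\Lambda]\otimes_{\BF[\Bt_{2\Lambda}]}\bigl(\BF[\Bt_{2\Lambda}]\otimes_{\BF[\Bt_{2\Lambda^+}]}\BB_q\bigr)
\;\cong\;
\BF[\Bt_\Lambda]\otimes_{\BF[\Bt_{2\Lambda^+}]}\BB_q,
\]
and symmetrically on the other side one obtains $\BB_q\otimes_{\BF[\Bt_{2\Lambda^+}]}\BF[\Bt_\Lambda]$. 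Combining with $\BD_q=\BE_q^1\otimes\BF[\Bs_\Lambda]$ yields the corollary.

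There is no real obstacle here: the two substantive inputs (invertibility of $\Bt_{2\Lambda^+}$ in $\BE_q^2$ and the direct-sum decomposition of $\BE_q^1$ over $\BE_q^2$) have already been absorbed into Theorem~\ref{thm:2}. The only point worth double-checking is that $\BF[\Bt_{2\Lambda^+}]$, although only a semigroup algebra, sits centrally inside $\BB_q$ by Theorem~\ref{thm:2}(i) and inside $\BF[\Bt_\Lambda]$ tautologically, so all tensor products are between bimodules over commutative rings and associativity applies without any flatness subtleties.
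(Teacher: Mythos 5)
Your argument is essentially the paper's: the corollary is stated without a proof precisely because it follows at once from \eqref{eq:genB1} together with Theorem \ref{thm:2} (iii), (iv) and the standard cancellation $M\otimes_{B}(B\otimes_{A}N)\cong M\otimes_{A}N$ applied to $A=\BF[\Bt_{2\Lambda^+}]\subset B=\BF[\Bt_{2\Lambda}]\subset\BF[\Bt_\Lambda]$. One small correction: $\BF[\Bt_{2\Lambda^+}]$ is \emph{not} central in $\BB_q$ --- by \eqref{subeq:relA} the elements $\Bt_\lambda$ only $q$-commute with $\Bm_\xi$ and $\Bd_x$ --- but this is harmless, since the balanced tensor products and the cancellation isomorphism need no centrality, only the fact that the isomorphisms of Theorem \ref{thm:2} are induced by multiplication and hence are one-sided module maps over the relevant subalgebras.
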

\section{Category of $D$-modules}
\subsection{}
Let us recall basic facts on the quantized flag manifold (see \cite{LR}, \cite{TBB}).
The quantized flag manifold $\CB_q$ is 
a non-commutative projective scheme defined by
 \[
 \CB_q=\Proj(A_q).
 \]
Namely the category $\Mod(\CO_{\CB_q})$ of quasi-coherent 
$\CO_{\CB_q}$-modules is given by
\[
\Mod(\CO_{\CB_q})
=
\Mod_\Lambda(A_q)/\Tor_{\Lambda^+}(A_q),
\]
where $\Mod_\Lambda(A_q)$ is the category of $\Lambda$-graded $A_q$-modules, and $\Tor_{\Lambda^+}(A_q)$
is its full subcategory consisting of $M\in \Mod_\Lambda(A_q)$ such that 
for any $m\in M$ there exists some $\nu\in\Lambda^+$ satisfying $A(\nu+\nu')m=0$ for any $\nu'\in\Lambda^+$.
We denote by
\begin{equation}
\varpi:
\Mod_\Lambda(A_q)\to\Mod(\CO_{\CB_q})
\end{equation}
the canonical functor.

For $w\in W$ 
set $R_{w,q}=(\CS_w^{-1}A_q)(0)$, and  define a non-commutative affine scheme $U_{w,q}$ by $U_{w,q}=\Spec(R_{w,q})$.
Namely the category $\Mod(\CO_{U_{w,q}})$ of quasi-coherent $\CO_{U_{w,q}}$-modules is given by the category $\Mod(R_{w,q})$ of $R_{w,q}$-modules.
The natural functor
\[
\Mod_\Lambda(A_q)\to\Mod(R_{w,q})
\qquad(M\mapsto (\CS_w^{-1}M)(0))
\]
induces an exact functor
\[
\Mod(\CO_{\CB_q})\to\Mod(\CO_{U_{w,q}})
\qquad(\CM\mapsto \CM|_{U_{w,q}}),
\]
by which 
we have an  affine open covering 
\[
\CB_q=\bigcup_{w\in W}U_{w,q}.
\]
In fact 
 we have the following patching property by \cite{LR}:
\begin{equation}
\label{eq:patch}
\CM\in \Mod(\CO_{\CB_q}), \;\;
\CM|_{U_{w,q}}=0\;\;(\forall w\in W)
\;\;
\Longrightarrow
\CM=0.
\end{equation}

Now let us consider $D$-modules on $\CB_q$.
Denote by $\Mod_\Lambda(D_q)$ the category of $\Lambda$-graded $D_q$-modules.
For $\theta\in\Hom_{\alg}(\BF[\sigma_\Lambda],\BF)$ we define  $\Mod_\Lambda(D_q,\theta)$
to be the full subcategory of 
$\Mod_\Lambda(D_q)$ 
consisting of $M\in \Mod_\Lambda(D_q)$ satisfying 
$
\sigma_\mu|_{M(\nu)}=\theta(\sigma_\mu)q^{(\mu,\nu)}\id$ for any $\mu, \nu\in\Lambda$.
Set 
\begin{align*}
\Mod(\DD_{\CB_q})
=&\Mod_\Lambda(D_q)/(\Mod_\Lambda(D_q)\cap\Tor_{\Lambda^+}(A_q)),
\\
\Mod(\DD_{\CB_q,\theta})
=&\Mod_\Lambda(D_q,\theta)/(\Mod_\Lambda(D_q,\theta)\cap\Tor_{\Lambda^+}(A_q))
\quad(\theta\in\Hom_{\alg}(\BF[\sigma_\Lambda],\BF)).
\end{align*}
For $a, b=1, 2$ we can similarly define 
for $\theta\in \Hom_{\alg}(\BF[\sigma_{b\Lambda}],\BF)$ the categories
$\Mod(\DD^{ab}_{\CB_q})$, $\Mod(\DD^{ab}_{\CB_q,\theta})$ using $D_q^{ab}$ instead of $D_q$.
Let $\res: \Hom_{\alg}(\BF[\sigma_{\Lambda}],\BF)\to \Hom_{\alg}(\BF[\sigma_{2\Lambda}],\BF)$ be the natural homomorphism given by the restriction.
By Theorem \ref{thm:1} (i) we see easily the following.
\begin{proposition}
\label{prop:eo}
For $a=1, 2$ and $\theta\in\Hom_{\alg}(\BF[\sigma_{\Lambda}],\BF)$ we have a natural equivalence
\[
\Mod(\DD^{a1}_{\CB_q,\theta})
\cong
\Mod(\DD^{a2}_{\CB_q,\res(\theta)})
\]
of categories.
\end{proposition}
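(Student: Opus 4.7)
The plan is to exhibit quasi-inverse functors using restriction of scalars and a canonical extension, relying on Theorem \ref{thm:1}(i) to lift the action. The key preliminary observation is that in any $M\in\Mod_\Lambda(D_q^{a1},\theta)$, the action of $\sigma_\mu$ for $\mu\in\Lambda$ is entirely determined by the grading and the character $\theta$: it acts on $M(\nu)$ as the scalar $\theta(\sigma_\mu)q^{(\mu,\nu)}$. Consequently, restriction of scalars yields an exact functor
\[
R: \Mod_\Lambda(D_q^{a1},\theta)\to\Mod_\Lambda(D_q^{a2},\res(\theta)),
\]
and because $\Tor_{\Lambda^+}(A_q)$ is defined solely in terms of the common subring $A_q\subset D_q^{a2}\subset D_q^{a1}$, this $R$ preserves the torsion Serre subcategories and descends to a functor between the Serre quotients.

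To build a quasi-inverse $E$, I would take $N\in\Mod_\Lambda(D_q^{a2},\res(\theta))$ and define operators $\widetilde\sigma_\mu\in\End_\BF(N)$ for $\mu\in\Lambda$ by declaring $\widetilde\sigma_\mu|_{N(\nu)}=\theta(\sigma_\mu)q^{(\mu,\nu)}\id$. The identity $\theta(\sigma_{2\lambda})=\res(\theta)(\sigma_{2\lambda})$ guarantees that $\widetilde\sigma_{2\lambda}$ coincides with the preexisting action of $\sigma_{2\lambda}$ coming from the $D_q^{a2}$-structure, and the multiplicativity of $\theta$ makes $\mu\mapsto\widetilde\sigma_\mu$ into a $\BF[\sigma_\Lambda]$-action on $N$. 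A direct check on homogeneous elements verifies that \eqref{eq:rel5} and \eqref{eq:rel6} hold between $\widetilde\sigma_\mu$ and the $D_q^{a2}$-action: for instance, given $\varphi\in A_q(\nu')$ and $n\in N(\nu)$, both $\widetilde\sigma_\mu(\ell_\varphi n)$ and $q^{(\mu,\nu')}\ell_\varphi(\widetilde\sigma_\mu n)$ evaluate to $\theta(\sigma_\mu)q^{(\mu,\nu+\nu')}\ell_\varphi n$; commutation with $\deru_u$ is similar but simpler.

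By Theorem \ref{thm:1}(i), the canonical multiplication $\BF[\sigma_\Lambda]\otimes_{\BF[\sigma_{2\Lambda}]}D_q^{a2}\to D_q^{a1}$ is bijective, so any pair of compatible actions of $D_q^{a2}$ and $\BF[\sigma_\Lambda]$ on $N$ satisfying \eqref{eq:rel5}, \eqref{eq:rel6} lifts uniquely to a $D_q^{a1}$-module structure extending the given $D_q^{a2}$-structure; this is precisely where freeness over $\BF[\sigma_{2\Lambda}]$ is essential. This defines $E:\Mod_\Lambda(D_q^{a2},\res(\theta))\to\Mod_\Lambda(D_q^{a1},\theta)$, and $R\circ E=\id$ tautologically while $E\circ R=\id$ follows from the preliminary observation that the $\sigma_\Lambda$-scalars are forced by $\theta$. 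A graded $D_q^{a2}$-linear map automatically commutes with $\widetilde\sigma_\mu$, since both sides multiply by the same scalar on each graded piece, so $R$ is fully faithful. Passing to the Serre quotients gives the stated equivalence. The main obstacle is the verification that the formally defined $\widetilde\sigma_\Lambda$-action assembles into an honest $D_q^{a1}$-module structure, which is handled by Theorem \ref{thm:1}(i); everything else is a routine diagram check using only the elementary relations \eqref{subeq:rel}.
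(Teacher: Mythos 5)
Your proof is correct and is essentially the argument the paper has in mind: the paper only says the proposition is "easily seen" from Theorem \ref{thm:1}(i), and your restriction/extension construction, with the $\sigma_\Lambda$-action on objects forced by $\theta$ and the decomposition $D_q^{a1}\cong\BF[\sigma_\Lambda]\otimes_{\BF[\sigma_{2\Lambda}]}D_q^{a2}$ supplying the lift, is exactly the intended fleshing-out. The observation that the torsion subcategories are defined via $A_q$ alone, so the equivalence descends to the Serre quotients, is the right way to finish.
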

\subsection{}
Let 
\[
\BF[\Lambda]=\bigoplus_{\lambda\in\Lambda}\BF e(\lambda)
\]
be the group algebra of $\Lambda$, and set
\begin{equation}
\zDq=A_q\otimes U_q(\Gg)\otimes\BF[\Lambda].
\end{equation}
Then it is easily seen that $\zDq$ is endowed 
with a $\Lambda$-graded $\BF$-algebra structure by 
the multiplication 
\begin{align*}
&
(\varphi\otimes u\otimes e(\lambda))
(\psi\otimes v\otimes e(\mu))
=
\sum_{(u)}
q^{(\lambda,\nu)}\varphi(u_{(0)}\cdot\psi)\otimes
u_{(1)}v\otimes e(\lambda+\mu)
\\
&\qquad(\varphi\in A_q, \nu\in\Lambda^+, \psi\in A_q(\nu), u, v\in U_q(\Gg), \lambda, \mu\in \Lambda)
\end{align*}
and the grading $\zDq(\lambda)=A_q(\lambda)\otimes U_q(\Gg)\otimes\BF[\Lambda]$.
We regard $A_q$, $U_q(\Gg)$, $\BF[\Lambda]$ as subalgebras of $\zDq$ by identifying them with
$A_q\otimes1\otimes1$, $1\otimes U_q(\Gg)\otimes1$, $1\otimes 1\otimes \BF[\Lambda]$ respectively.
Then the multiplication of $\zDq$ gives isomorphisms 
\[
A_q\otimes U_q(\Gg)\otimes\BF[\Lambda]\xrightarrow{\sim} \zDq,
\qquad
\BF[\Lambda]\otimes U_q(\Gg)\otimes A_q
\xrightarrow{\sim} \zDq
\]
of $\BF$-modules.
In $\zDq$ we have
\begin{subequations}
\label{subeq:relX}
\begin{align}
\label{eq:relX4}
u\varphi=&\sum_{(u)}
(u_{(0)}\cdot\varphi)u_{(1)}
&(u\in U_q(\Gg),\; \varphi\in {A_q}),
\\
\label{eq:X4a}
\varphi u=&
\sum_{(u)}
u_{(1)}((S^{-1}u_{(0)})\cdot\varphi)
&
(u\in U_q(\Gg),\; \varphi\in {A_q}),
\\
\label{eq:relX5}
e(\lambda)u=&
ue(\lambda)
&(\lambda\in\Lambda,\;  u\in U_q(\Gg)),
\\
\label{eq:relX6}
e(\lambda)\varphi=&
q^{(\lambda,\nu)}\varphi e(\lambda)
&(\lambda\in\Lambda,\; \nu\in\Lambda^+,\;\varphi\in {A_q}(\nu)).
\end{align}
\end{subequations}

We have a surjective $\BF$-algebra homomorphism
\[
{}^0p:\zDq\to D_q
\]
given by 
\[
\varphi
\mapsto\ell_\varphi\;\;(\varphi\in A_q), \qquad
u\mapsto\deru_u\;\;(u\in U_q(\Gg)), \qquad
e(\lambda)\mapsto\sigma_\lambda\;\;(\lambda\in\Lambda).
\]
For $\varphi\in A_q(\nu)_\mu$ with $\nu\in\Lambda^+$, $\mu\in\Lambda$ set 
\begin{equation}
\Omega(\varphi)=
\sum_r(y_r\cdot\varphi){x_r}{k_{-2\mu}}e(2\nu)
-\sum_r((Sx_r)\cdot\varphi)(y_rk_{\beta_r})
\in \zDq
\end{equation}
(see \eqref{eq:canon} for the notation).
By Proposition \ref{prop:Rel1} we have $\Omega(\varphi)\in\Ker({}^0p)$.
We set
\begin{equation}
\tilde{D}_q=
\zDq/I_q,
\qquad
I_q=\sum_{\nu\in\Lambda^+,\mu\in\Lambda}\sum_{\varphi\in A_q(\nu)_\mu}
\zDq\Omega(\varphi)\zDq.
\end{equation}
Then ${}^0p$ induces a surjective homomorphism
$\tilde{p}:\tilde{D}_q\to D_q$
of $\Lambda$-graded algebras.
\begin{lemma}
For any $w\in W$ 
the multiplicative set $\CS_w$ satisfies the left and right Ore conditions both in $\zDq$ and $\tilde{D}_q$.
\end{lemma}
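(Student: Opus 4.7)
The plan is to establish the Ore conditions for $\CS_w$ first in $\zDq$, and then to deduce them in $\tilde{D}_q$ via the quotient map $\pi\colon \zDq\twoheadrightarrow \tilde{D}_q$ induced by the ideal $I_q$. The argument for $\zDq$ is a direct adaptation of the proof of the analogous statement for $D_q$ given in \cite[Section 1.2]{LRD} and \cite[Proposition 5.1]{TBB}, as the defining relations \eqref{subeq:relX} of $\zDq$ are formally identical to the relations \eqref{subeq:rel} satisfied by the generators $\ell_\varphi, \deru_u, \sigma_\lambda$ of $D_q$.

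For $\zDq$, given $P\in \zDq$ and $c\in\CS_w$, I would use the $\BF$-module isomorphism $\zDq\cong A_q\otimes U_q(\Gg)\otimes\BF[\Lambda]$ to reduce to the case where $P$ is a single monomial $\varphi\cdot u\cdot e(\lambda)$; finite sums are handled by common right multiples, which exist in $\CS_w$ because $\CS_w\cdot\CS_w\subseteq\CS_w$ (this uses that $A_q$ is an integral domain). The three essential inputs are (a) the Ore conditions for $\CS_w$ in $A_q$, (b) the integrability of the $U_q(\Gg)$-action on $A_q$ (so that, for any $c'\in\CS_w$ of weight $\nu_{c'}$, the image $U_q(\Gg)\cdot c'$ is contained in the finite-dimensional subspace $A_q(\nu_{c'})$), and (c) the commutation relations \eqref{subeq:relX}. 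For right Ore, commuting $e(\lambda)$ past $c'$ via \eqref{eq:relX6} contributes only a scalar, and then \eqref{eq:relX4} yields
\[
Pc'=q^{(\lambda,\nu_{c'})}\sum_{(u)}\varphi(u_{(0)}\cdot c')\,u_{(1)}e(\lambda);
\]
the ``$A_q$-coefficients'' $\varphi(u_{(0)}\cdot c')$ lie in the finite-dimensional subspace $\varphi\cdot A_q(\nu_{c'})$, and a suitable choice of $c'\in\CS_w$ (obtained by applying right Ore in $A_q$ to a basis of this subspace and then taking a common right multiple in $\CS_w$) ensures each such coefficient lies in $cA_q$, which produces the required $P'\in\zDq$ with $Pc'=cP'$. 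The left Ore case is symmetric, using \eqref{eq:X4a} in place of \eqref{eq:relX4}.

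For $\tilde{D}_q$, the projection $\pi\colon\zDq\twoheadrightarrow \tilde{D}_q$ factorizes the map of the excerpt as ${}^0p=\tilde p\circ\pi$. Since ${}^0p(c)=\ell_c\ne 0$ for every $c\in\CS_w$, the image $\pi(c)$ is nonzero in $\tilde{D}_q$, so $\CS_w$ may be regarded via $\pi$ as a multiplicative subset of $\tilde{D}_q$ not containing zero. Given $\bar P\in\tilde{D}_q$ and $\bar c=\pi(c)$ with $c\in\CS_w$, I would lift $\bar P$ to some $P\in\zDq$ with $\pi(P)=\bar P$, apply the Ore conditions in $\zDq$ to produce $c'\in\CS_w$ and $P'\in\zDq$ satisfying $c'P=P'c$ (resp.\ $Pc'=cP'$), and project to obtain $\pi(c')\bar P=\pi(P')\bar c$ (resp.\ $\bar P\pi(c')=\bar c\pi(P')$) in $\tilde{D}_q$.

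The principal obstacle is the detailed verification of Ore in $\zDq$: commuting a prospective Ore denominator $c'$ past a general element $u\in U_q(\Gg)$ produces, via \eqref{eq:relX4}, a Sweedler-indexed sum of intermediate $A_q$-elements, and a single choice of $c'\in\CS_w$ must bring all of them into $cA_q$ simultaneously. This is achieved by combining integrability of the $U_q(\Gg)$-action, which confines the intermediate elements to a finite-dimensional subspace of $A_q$, with closure of $\CS_w$ under common right multiples, exactly as in the $D_q$ case. Once Ore in $\zDq$ is in hand, the descent to $\tilde{D}_q$ is essentially formal.
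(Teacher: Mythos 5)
Your argument for $\zDq$ is essentially the paper's: the common-multiple condition follows from the relations \eqref{subeq:relX} together with integrability and the Ore property of $\CS_w$ in $A_q$, while the annihilation condition holds trivially there because $\zDq$ is free as a right $A_q$-module and $A_q$ is a domain, so elements of $\CS_w$ are regular in $\zDq$. Likewise, your lift--apply--project argument correctly transfers the common-multiple condition to $\tilde{D}_q$.

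The genuine gap is in the claim that the descent to $\tilde{D}_q$ is ``essentially formal''. The Ore conditions have two parts, and only the common-multiple part passes to a quotient by lifting and projecting. In $\tilde{D}_q=\zDq/I_q$ the images of elements of $\CS_w$ may a priori be zero divisors, and the second (reversibility) condition demands: if $as\in I_q$ with $a\in\zDq$, $s\in\CS_w$, then $ta\in I_q$ for some $t\in\CS_w$. This is a statement about the ideal $I_q$ itself and cannot be obtained from the Ore conditions in $\zDq$ alone; your proposal never addresses it. The paper handles it by first proving an Ore-type compatibility of $I_q$ with $\CS_w$, namely that for $(b,s)\in I_q\times\CS_w$ there exist $(c,t)\in I_q\times\CS_w$ with $tb=cs$; this requires moving elements of $\CS_w$ past the generators $\Omega(\varphi)$ of $I_q$ and rests on \eqref{subeq:relX} together with \cite[(4.11)]{T1}. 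Granting that, if $as\in I_q$ one gets $t(as)=cs$ with $c\in I_q$, and since right multiplication by $s$ is injective in $\zDq$ (free right $A_q$-module over a domain) this forces $ta=c\in I_q$, i.e.\ $\overline{ta}=0$ in $\tilde{D}_q$. Without this step --- or some substitute controlling how $\CS_w$ interacts with $I_q$ --- the localization of $\tilde{D}_q$ at $\CS_w$ is not justified, so the proof as proposed is incomplete.
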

\begin{proof}
We only show the left Ore conditions:
\begin{subequations}
\begin{align}
\label{eq:Ore1}
(a,s)\in R\times \CS_w\Longrightarrow\exists(b,t)\in R\times\CS_w\;\;\text{s.t.}\;\; ta=bs,
\\
\label{eq:Ore2}
(a,s)\in R\times \CS_w, \; as=0
\Longrightarrow\exists \;t\in\CS_w\;\;\text{s.t.}\;\; ta=0
\end{align}
\end{subequations}
for $R=\zDq, \tilde{D}_q$.
The right Ore conditions are proved similarly.

We see easily from \eqref{subeq:relX} that 
\eqref{eq:Ore1} holds for $R=\zDq$.
Moreover, since $\zDq$ is a free right $A_q$-module 
and since $A_q$ is an integral  domain, we obtain 
\begin{align}
\label{eq:Ore3}
(a,s)\in \zDq\times \CS_w, \; as=0
\Longrightarrow a=0,
\end{align}
which implies \eqref{eq:Ore2} for $R=\zDq$.

The condition \eqref{eq:Ore1} for $R=\tilde{D}_q$ follows from that for $R=\zDq$.
Let us finally show \eqref{eq:Ore2} for $R=\tilde{D}_q$.
We first note
\begin{align}
\label{eq:Ore4}
(b,s)\in I_q\times \CS_w\Longrightarrow\exists(c,t)\in I_q\times\CS_w\;\;\text{s.t.}\;\; tb=cs,
\end{align}
which is a consequence of \eqref{subeq:relX} and \cite[(4.11)]{T1}.
Assume $\overline{as}=0$ in $\tilde{D}_q$ for $(a,s)\in\zDq\times\CS_w$.
Then we have $as\in I_q$.
By \eqref{eq:Ore4} there exists $(c,t)\in I_q\times\CS_w$ such that $tas=cs$.
Then by \eqref{eq:Ore3} we have $ta=c\in I_q$.
Hence $\overline{ta}=0$ in $\tilde{D}_q$.
\end{proof}
\begin{lemma}
\label{lem:KT}
We have
$\Ker(\tilde{p})\in \Tor_{\Lambda^+}(A_q)$.
\end{lemma}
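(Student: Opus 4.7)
The plan is to reduce the claim to a local statement on each affine chart $U_{w,q}$, $w\in W$, and then invoke the patching property \eqref{eq:patch}. Since the previous lemma provides the Ore conditions for $\CS_w$ in both $\zDq$ and $\tilde D_q$, the functor $\CS_w^{-1}(-)$ is exact, and the short exact sequence $0\to\Ker(\tilde p)\to\tilde D_q\xrightarrow{\tilde p}D_q\to 0$ yields $\CS_w^{-1}\Ker(\tilde p)=\Ker(\CS_w^{-1}\tilde p)$. Taking the degree-zero part and using the identification $\varpi(M)|_{U_{w,q}}=(\CS_w^{-1}M)(0)$, it suffices to show that $(\CS_w^{-1}\tilde p)(0):(\CS_w^{-1}\tilde D_q)(0)\to(\CS_w^{-1}D_q)(0)=\DD_q|_{U_{w,q}}$ is injective for every $w\in W$. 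The patching property will then force $\varpi(\Ker(\tilde p))=0$, which is exactly the desired statement $\Ker(\tilde p)\in\Tor_{\Lambda^+}(A_q)$.

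To obtain the injectivity for each $w$, the plan is to use Lusztig's braid group action to reduce to the case $w=1$. The automorphisms $T_i$ of $U_q(\Gg)$ are expected to extend to compatible automorphisms of $\zDq$ preserving the relations \eqref{subeq:relX}, and a direct check that they preserve the ideal $I_q$ will let them descend to automorphisms of $\tilde D_q$. These will intertwine the maps $(\CS_w^{-1}\tilde p)(0)$ for varying $w$, in parallel with the isomorphisms $\DD_q|_{U_{w,q}}\cong\DD_q|_{U_{1,q}}$ on the $D_q$ side announced in the introduction.

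For $w=1$, the strategy is to describe $(\CS_1^{-1}\tilde D_q)(0)$ explicitly, matching the description of $\DD_q|_{U_{1,q}}$ obtained in Section 5. The key point is that the generating relations $\Omega(\varphi)=0$ of $I_q$ are exactly the identities \eqref{eq:Rel1} of Proposition \ref{prop:Rel1}. Replaying the derivation of Proposition \ref{prop:Rel2} inside $\tilde D_q$ will then yield analogous expressions of $\deru_{k_{2\lambda}}\sigma_{-2\lambda}$ and $\deru_y$ (for $y\in\tilde U_q(\Gn^-)$) in terms of $\ell_\varphi,\deru_x,\sigma_\mu$. Combining this with Theorem \ref{thm:2} and the basis of $\BB_q$ from Proposition \ref{prop:B}(i), one should obtain a PBW-type basis of $(\CS_1^{-1}\tilde D_q)(0)$ that $(\CS_1^{-1}\tilde p)(0)$ sends bijectively to the corresponding basis of $\DD_q|_{U_{1,q}}$.

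The main obstacle is verifying that the relations in $I_q$ suffice to construct the above PBW-type basis of $(\CS_1^{-1}\tilde D_q)(0)$. One must replay, inside $\tilde D_q$, the full chain of arguments underlying Theorem \ref{thm:2} — including the analog of Proposition \ref{prop:Rel2} and the Ore localization of $\BB_q$ at $\Bt_{2\Lambda^+}$ — working only with the relations \eqref{subeq:relX} and $\Omega(\varphi)=0$, without appealing to the faithful action on $A_q$ that underpins the original construction of $D_q$.
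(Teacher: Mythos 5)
Your proposal follows essentially the same route as the paper: reduce via the exact sequence, $\varpi$, and the patching property \eqref{eq:patch} to injectivity of $(\CS_w^{-1}\tilde p)(0)$, use the braid group action to reduce to $w=1$, and then replay Propositions \ref{prop:Rel2} and \ref{prop:B} inside $\tilde D_q$ (using that $\Omega(\varphi)=0$ encodes \eqref{eq:Rel1}) so that the resulting spanning family is matched against the tensor-product description of $\BD_q$ from \eqref{eq:genB1}, Proposition \ref{prop:B}(i) and Theorem \ref{thm:2}. The only slight overshoot is that you ask for a PBW basis of $(\CS_1^{-1}\tilde D_q)(0)$, whereas the paper's argument needs only a spanning set whose image under the surjection is linearly independent in $(\CS_1^{-1}D_q)(0)$, which is exactly what the cited results provide.
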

\begin{proof}
We obtain from the exact sequence
\[
0\to \Ker(\tilde{p})\to \tilde{D}_q\to D_q\to 0
\]
in $\Mod_\Lambda(A_q)$
the exact sequence 
\[
0\to \varpi(\Ker(\tilde{p}))\to
\varpi(\tilde{D}_q)\to \varpi(D_q)\to 0
\]
in $\Mod(\CO_{\CB_q})$.
Hence it is sufficient to show that 
the morphism 
$\varpi(\tilde{D}_q)\to \varpi(D_q)$
in $\Mod(\CO_{\CB_q})$ is an isomorphism.
By \eqref{eq:patch} we have only to verify that 
$\varpi(\tilde{D}_q)|_{U_{w,q}}\to \varpi(D_q)|_{U_{w,q}}$ is an isomorphism for any $w\in W$.
By
$\varpi(M)|_{U_{w,q}}\cong(\CS_w^{-1}M)(0)$ for $M\in \Mod_\Lambda(A_q)$
 it is sufficient to show 
that $(\CS_w^{-1}\tilde{D}_q)(0)
\to(\CS_w^{-1}{D}_q)(0)$ is an isomorphism.
Using the action of the braid group on $D_q$ and $\tilde{D}_q$ we are reduced to the case $w=1$ (see \cite[4.2]{T1}).
Namely, it is sufficient to show that 
the surjective algebra homomorphism
 $(\CS_1^{-1}\tilde{D}_q)(0)
\to(\CS_1^{-1}{D}_q)(0)$ is injective.

For $\lambda\in\Lambda$ set
\[
\tilde{\Bs}_\lambda=\overline{e(\lambda)}
\in (\CS_1^{-1}\tilde{D}_q)(0),
\qquad
\tilde{\Bt}_\lambda=\overline{k_{-\lambda}e(\lambda)}\in (\CS_1^{-1}\tilde{D}_q)(0),
\]
where the canonical homomorphism $(\CS_1^{-1}(\zDq))(0)\to (\CS_1^{-1}\tilde{D}_q)(0)$ is denoted by $a\mapsto \overline{a}$.
We also set
\begin{align*}
\tilde{\BB}_q=&\langle
\overline{(\CS_1^{-1}A_q)(0)}, 
\overline{U_q(\Gn^+)}\rangle
\subset (\CS_1^{-1}\tilde{D}_q)(0),
\\
\tilde{\BE}^1_q=&
\langle \tilde{\BB}_q, \tilde{\Bt}_\lambda\mid
\lambda\in\Lambda\rangle
\subset (\CS_1^{-1}\tilde{D}_q)(0).
\end{align*}
Then by the proofs of Proposition \ref{prop:Rel2}, Proposition \ref{prop:B} we see easily that 
\begin{align*}
(\CS_1^{-1}\tilde{D}_q)(0)=\sum_{\lambda\in\Lambda}
\tilde{\BE}^1_q\tilde{\Bs}_\lambda,
\qquad
\tilde{\BE}^1_q=\sum_{\lambda\in\Lambda}\tilde{\Bt}_\lambda\tilde{\BB}_q,
\qquad
\tilde{\Bt}_{\lambda}\in\tilde{\BB}_q\quad(\lambda\in 2\Lambda^+).
\end{align*}
Moreover, the canonical homomorphism
\begin{align*}
(\CS_1^{-1}A_q)(0)\otimes U_q(\Gn^+)\to \tilde{\BB}_q
\end{align*}
is surjective.
Therefore, we conclude that the canonical surjective homomorphism $(\CS_1^{-1}\tilde{D}_q)(0)
\to(\CS_1^{-1}{D}_q)(0)$ is injective by
\eqref{eq:genB1}, Proposition \eqref{prop:B} (i), Theorem \ref{thm:2}.
\end{proof}

Define a category $\Mod(\tilde{\DD}_{\CB_q})$
similarly to $\Mod({\DD}_{\CB_q})$
using $\tilde{D}_q$ instead of $D_q$.
\begin{theorem}
\label{thm:3}
We have an equivalence 
\[
\Mod(\tilde{\DD}_{\CB_q})\cong
\Mod({\DD}_{\CB_q})
\]
of abelian categories.
\end{theorem}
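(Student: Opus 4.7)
The plan is to show that restriction of scalars along the surjection $\tilde p: \tilde{D}_q \to D_q$ realizes the claimed equivalence. Write $\CT = \Tor_{\Lambda^+}(A_q)$ and let $F: \Mod_\Lambda(D_q) \to \Mod_\Lambda(\tilde{D}_q)$ denote restriction along $\tilde p$. Since $\tilde p$ intertwines the two embeddings of $A_q$ (as $\varphi$ into $\tilde{D}_q$, and as $\ell_\varphi$ into $D_q$), the functor $F$ preserves the underlying $\Lambda$-graded $A_q$-module structure, hence preserves $\CT$, and therefore descends to an exact functor $\bar F: \Mod({\DD}_{\CB_q}) \to \Mod(\tilde{\DD}_{\CB_q})$. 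The task is to show $\bar F$ is an equivalence.

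As quasi-inverse I would use the functor induced by $G_0(\tilde M) = \tilde M/\Ker(\tilde p)\tilde M$, a right-exact, $\Lambda$-graded functor $\Mod_\Lambda(\tilde{D}_q) \to \Mod_\Lambda(D_q)$ satisfying $G_0 \circ F = \id$ on the nose, since $\Ker(\tilde p) \cdot F(M) = 0$ by construction. The key step is to verify that the canonical surjection $\eta_{\tilde M}: \tilde M \twoheadrightarrow F G_0(\tilde M)$ has kernel $\Ker(\tilde p)\tilde M$ lying in $\CT$ for every $\tilde M$. For a finite sum $y = \sum_i x_i m_i$ with $x_i \in \Ker(\tilde p)$ and $m_i \in \tilde M$, Lemma \ref{lem:KT} provides $\nu_i \in \Lambda^+$ with $A_q(\nu_i + \nu')x_i = 0$ for all $\nu' \in \Lambda^+$; any $\nu \in \Lambda^+$ satisfying $\nu - \nu_i \in \Lambda^+$ for every $i$ then gives $A_q(\nu + \nu')y = 0$ for every $\nu' \in \Lambda^+$, showing that $y$ is torsion.

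The remainder is formal. Right-exactness of $G_0$ and stability of $\CT$ under subquotients imply that $G_0$ sends $\CT$-quasi-isomorphisms to $\CT$-quasi-isomorphisms: the cokernel of $G_0(f)$ identifies with $G_0(\Cok f)$, hence is a quotient of $\Cok f$; and the kernel of $G_0(f)$ fits in a short exact sequence whose outer terms are a quotient of $\Ker f$ and a subquotient of $\Ker(\tilde p)\tilde N$, both in $\CT$ (the latter by the key step applied to $\tilde N$). Consequently $G_0$ descends to a functor $\bar G_0: \Mod(\tilde{\DD}_{\CB_q}) \to \Mod({\DD}_{\CB_q})$, with $\bar G_0 \circ \bar F = \id$ inherited from the nose-level identity, and $\bar F \circ \bar G_0 \cong \id$ through the natural transformation $\bar\eta$. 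The main substance is concentrated in the key step above, which rests squarely on Lemma \ref{lem:KT}; everything else is elementary bookkeeping in Serre quotient categories.
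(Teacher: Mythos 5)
Your proposal is correct and takes essentially the same approach as the paper: restriction of scalars along $\tilde{p}$ in one direction, a quotient functor in the other, with Lemma \ref{lem:KT} supplying the key fact that $\Ker(\tilde{p})\tilde{M}$ is torsion. The only cosmetic difference is that the paper's quasi-inverse is $\tilde{M}\mapsto\tilde{M}/\Tor(\tilde{M})$ rather than your $\tilde{M}\mapsto\tilde{M}/\Ker(\tilde{p})\tilde{M}$; since $\Ker(\tilde{p})\tilde{M}\subset\Tor(\tilde{M})$, the two induce the same functor on the Serre quotient categories.
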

\begin{proof}
The surjective homomorphism $\tilde{p}:\tilde{D}_q\to D_q$ gives a functor $\tilde{p}^*:
\Mod_\Lambda(D_q)
\to
\Mod_\Lambda(\tilde{D}_q)
$.
We have also a functor in the opposite direction 
\[
\Mod_\Lambda(\tilde{D}_q)
\to\Mod_\Lambda(D_q)
\]
given by $M\mapsto M/\Tor(M)$, where
\[
\Tor(M)=\{m\in M\mid
\exists \nu\in\Lambda^+\;\text{s.t.}\;
A_q(\nu+\nu')m=0\;(\forall\nu'\in\Lambda^+)\}.
\]
It is easily seen by Lemma \ref{lem:KT} that those functors induce the equivalence 
$
\Mod(\tilde{\DD}_{\CB_q})\cong
\Mod({\DD}_{\CB_q})$.
\end{proof}
\begin{remark}
The analogues of  Theorem \ref{thm:1}, 
Theorem \ref{thm:2}, Proposition \ref{prop:eo}, 
Theorem \ref{thm:3}
also hold 
in the situation when $q$ is specialized to $\zeta\in\BC^\times$ satisfying $\zeta^{(\alpha_i,\alpha_i)}\ne1$ for any $i\in I$ 
(see \cite{T1} for a more precise formulation).
This is easily seen by checking that 
the corresponding statements over 
$\BA=\BQ[q^{\pm1/N},(q_i-q_i^{-1})^{-1}\mid i\in I]$ 
follow from those over $\BF$.
Details are omitted.
\end{remark}

\bibliographystyle{unsrt}

\end{document}